\documentclass[a4paper,11pt]{amsart}
\usepackage[matrix,arrow,tips,curve]{xy}
\usepackage[english]{babel}
\usepackage{amsmath}
\usepackage{amssymb}
\usepackage{tikz}
\usepackage{mathrsfs}
\usepackage{enumerate}
\usepackage{graphicx}
\usepackage{hyperref}
\usetikzlibrary{trees}
\usetikzlibrary{arrows}

\oddsidemargin = 0.0mm
\evensidemargin = 0.0mm
\topmargin = 10mm
\textheight = 235mm
\textwidth = 170mm
\voffset = -10mm
\hoffset = -5.5mm

\newtheorem*{thm*}{Theorem}

\theoremstyle{definition}

\newtheorem{teo}{Theorem}[section]
\newtheorem{lema}[teo]{Lemma}

\newtheorem{cor}[teo]{Corollary}

\definecolor{wwwwww}{rgb}{0.4,0.4,0.4}

\hypersetup{pdfpagemode=UseNone}
\hypersetup{pdfstartview=FitH}
		
\setcounter{tocdepth}{1}		
				
\begin{document}

\title[\resizebox{6.1in}{!}{Volume estimates and classification theorem for constant weighted mean curvature hypersurfaces}]{Volume estimates and classification theorem for constant weighted mean curvature hypersurfaces}

\author[Saul Ancari]{Saul Ancari}\thanks{This study was financed in part by the Coordenação de Aperfeiçoamento de Pessoal de Nível Superior - Brasil (CAPES) Finance Code 001 }
\address{\sc Saul Ancari\\
Instituto de Matem\'atica e Estat\'istica, Universidade Federal Fluminense\\
24210-201 Niter\'oi, Rio de Janeiro\\ Brazil}
\email{sa\_ancari@id.uff.br}

\author[Igor Miranda]{Igor Miranda}
\address{\sc Igor Miranda\\
Instituto de Matem\'atica e Estat\'istica, Universidade Federal Fluminense\\
24210-201 Niter\'oi, Rio de Janeiro\\ Brazil}
\email{igor\_miranda@id.uff.br}

\keywords{weighted mean curvature, exponential volume growth, finite weighted volume}

\begin{abstract}
In this paper, we prove a classification for complete embedded constant weighted mean curvature hypersurfaces $\Sigma\subset\mathbb{R}^{n+1}$. We characterize the hyperplanes and generalized round cylinders by using an intrinsic property on the norm of the second fundamental form. Furthermore, we prove an equivalence of  properness, finite weighted volume and exponential volume growth for submanifolds with weighted mean curvature of at most linear growth.
\end{abstract}

\maketitle 
\section{Introduction}
In the mean curvature flow theory, one of the main problems is to understand possible singularities that the flow goes through. Self-shrinkers play  an important role in this theory since they are singularity models for the flows . These hypersurfaces satisfy the following mean curvature condition
\[
H=\frac{\langle x , \nu \rangle}{2}
\]
where $H$, $x$ and $\nu$ stand for the mean curvature of $\Sigma$, the position vector in $\mathbb{R}^{n}$ and the unit normal vector of $\Sigma$, respectively. Another characterization of the self-shrinkers is that they are critical points of the weighted area functional
\stepcounter{thm}
\begin{eqnarray}\label{11}
F(\Sigma)=\int_\Sigma e^{-\frac{|x|^2}{4}}dv.
\end{eqnarray}

There is a great interest in studying two-sided smooth hypersurfaces $\Sigma\subset\mathbb{R}^{n+1}$ which are critical points of the functional (\ref{11}) for variations $G:(-\varepsilon,\varepsilon)\times\Sigma \rightarrow \mathbb{R}^{n+1}$ that preserve enclosed weighted volume. These variations can be represented by functions $u:\Sigma \rightarrow \mathbb{R}$ defined by 
\[
u(x)=\langle \partial_t G(0,x),\nu(x)\rangle
\]
such that $\int_\Sigma u\  e^{-|x|^2/4}dv =0$, where $\nu$ is the normal vector of $\Sigma$. It is well known that these hypersurfaces satisfy the following condition
\begin{eqnarray*}
H=\frac{\langle x, \nu\rangle}{2}+\lambda,
\end{eqnarray*}
where $\lambda \in \mathbb{R}$. Such hypersurfaces are known as constant weighted mean curvature hypersurfaces. Throughout this paper, whenever $\Sigma$ satisfies the mean curvature condition above, they will be called CWMC hypersurfaces and $\lambda$ denotes the weighted mean curvature. The study of such hypersurfaces arises in geometry and probability as solutions to the Gaussian isoperimetric problem.\\
Here are some examples of CWMC hypersurfaces.
\ex Any self-shrinker is a CWMC hypersurface with $\lambda=0$.
\ex All hyperplanes in $\mathbb{R}^{n+1}$ are CWMC hypersurfaces with $\lambda=\pm\frac{d}{2}$, where $d$ denotes the distance from the hyperplane to the origin and the sign depends on the orientation. Indeed, let $\Sigma\subset \mathbb{R}^{n+1}$ be a hyperplane and $x_0\in \Sigma$ such that $d(\Sigma,0)=d(x_0,0)=d$. This implies that $\pm d=\langle x_0, \nu\rangle$, where $\nu$ is the normal vector of $\Sigma$. Since $\Sigma= \lbrace x\in \mathbb{R}^{n+1}; \langle x, \nu \rangle= \pm d \rbrace$ and the mean curvature of $\Sigma$ vanishes, we have
\[
H=0=\frac{\langle x,\nu \rangle}{2} \pm \frac{d}{2}.
\]
Therefore, $\Sigma$ is a CWMC hypersurface with $\lambda=\pm d/2$, as asserted.

\ex The spheres centered at the origin with radius $\sqrt{\lambda^2+2n}-\lambda$ are CWMC hypersurfaces in $\mathbb{R}^{n+1}$.
\ex The cylinders $S^k_r(0)\times \mathbb{R}^{n+k}$ with radius $\sqrt{\lambda^2+2k}-\lambda$ are also CWMC hypersurfaces in $\mathbb{R}^{n+1}$.\\

In \cite{huisken1990asymptotic}, Huisken proved that the spheres $S^n(\sqrt{2n})$ are the only compact self-shrinkers with non-negative mean curvature of dimension $n\geq 2$. Huisken \cite{huisken54local} also showed that the generalized cylinders are the only complete self-shrinkers in $\mathbb{R}^{n+1}$ with non-negative mean curvature, polynomial volume growth and such that the norm of the second fundamental form is bounded. Colding and Minicozzi \cite{colding2012generic} generalized Huisken's classification by removing the boundness condition on the second fundamental form. Rimoldi \cite{rimoldi2014classification} proved the same classification theorem as Colding-Minicozzi, replacing the polynomial volume growth by a assumption on the integrability of $|A|$. More specifically
\begin{teo}\label{ti3} $S^k(\sqrt{2n})\times \mathbb{R}^{n-k}$ , $0 \leq k \leq n$, are the only smooth complete embedded self-shrinkers in $\mathbb{R}^{n+1}$ with $H\geq 0$ and $|A|\in L^2_f(\Sigma)$.
\end{teo}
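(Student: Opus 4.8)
The plan is to reduce to the Colding--Minicozzi framework \cite{colding2012generic} and to replace their polynomial volume growth hypothesis by a cutoff argument driven solely by $|A|\in L^2_f(\Sigma)$, where $L^2_f$ is taken with respect to the Gaussian density $e^{-|x|^2/4}$. Write $\mathcal{L}=\Delta-\tfrac12\langle x,\nabla\,\cdot\,\rangle$ for the associated drift Laplacian, which is self-adjoint with respect to $e^{-|x|^2/4}\,dv$. For a self-shrinker one has the Simons-type identities
\[
\mathcal{L}H=\Big(\tfrac12-|A|^2\Big)H,\qquad \mathcal{L}|A|^2=(1-2|A|^2)\,|A|^2+2|\nabla A|^2 .
\]
The first step is to apply the strong maximum principle to $H$: since $H\ge 0$ solves the linear elliptic equation $\mathcal{L}H+(|A|^2-\tfrac12)H=0$, either $H>0$ everywhere or $H\equiv 0$. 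In the latter case $\langle x,\nu\rangle\equiv 0$, so $\Sigma$ is a smooth complete minimal cone through the origin, hence a hyperplane; this is the degenerate ($k=0$) member of the list. It remains to treat the case $H>0$.

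Assuming $H>0$, I would normalize the second fundamental form and set $T_{ijk}=H\nabla_iA_{jk}-A_{jk}\nabla_iH=H^2\nabla_i\big(A_{jk}/H\big)$. Combining the two identities above with a Bochner computation yields a pointwise identity of the form
\[
\mathcal{L}\Big(\frac{|A|^2}{H^2}\Big)=\frac{2}{H}\Big\langle \nabla H,\ \nabla\frac{|A|^2}{H^2}\Big\rangle+\frac{2}{H^4}\,|T|^2 ,
\]
the crucial feature being that the zeroth-order factor $(1-2|A|^2)$ cancels in the quotient, leaving only the nonnegative ``defect'' $|T|^2/H^4$. Integrating this identity against $e^{-|x|^2/4}$ and moving the drift term by the self-adjointness of $\mathcal{L}$ is designed to give
\[
\int_\Sigma \frac{|T|^2}{H^2}\,e^{-|x|^2/4}\,dv=0 ,
\]
whence $T\equiv 0$, i.e. $A/H$ is parallel. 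A complete connected self-shrinker with $H>0$ and parallel normalized second fundamental form is rigid: the principal curvatures have constant ratios, and the self-shrinker equation then fixes the radius, forcing $\Sigma$ to be one of the generalized cylinders in the statement. This last implication is exactly the rigidity conclusion of \cite{colding2012generic}.

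The main obstacle is to legitimize the integration by parts, because the displayed integral identity is an honest consequence of Stokes' theorem only once the boundary contributions on $\partial B_R\cap\Sigma$ are shown to vanish; in \cite{colding2012generic} this is precisely where polynomial volume growth enters. I would instead run the computation against $\varphi_R^2\,e^{-|x|^2/4}$, where $\varphi_R$ is a cutoff equal to $1$ on $B_R$, supported in $B_{2R}$, with $|\nabla\varphi_R|\le C/R$. Since $\varphi_R$ has compact support no boundary term appears, and the resulting error is a sum of annular terms $\int_\Sigma \varphi_R|\nabla\varphi_R|\,(\cdots)\,e^{-|x|^2/4}$ which, by Cauchy--Schwarz and the pointwise positivity of $H$, are absorbed into the good term $\int\varphi_R^2\,|T|^2H^{-2}e^{-|x|^2/4}$ up to a remainder dominated by $R^{-2}$ times weighted integrals of $|A|^2$ (and of the normalized quantity $|A|^2/H^2$) over $B_{2R}\setminus B_R$. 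The hypothesis $|A|\in L^2_f(\Sigma)$ makes the tail $\int_{\Sigma\setminus B_R}|A|^2e^{-|x|^2/4}\to 0$, so the remainder vanishes as $R\to\infty$ and the identity follows.

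The technical heart, and the step I expect to require the most care, is the integrability bookkeeping that makes the above scheme meaningful: one must show that $\Sigma$ has finite weighted volume and that $\nabla A$ and $\nabla H$ lie in $L^2_f(\Sigma)$, each of which I would extract from $|A|\in L^2_f$ by feeding it back into the Simons identity for $|A|^2$ via a weighted Caccioppoli estimate (testing that identity against $\varphi_R^2 e^{-|x|^2/4}$, using the positivity of $H$, and absorbing the gradient term). Only once these finiteness facts are in place does the cutoff scheme close and deliver $T\equiv 0$; the remainder of the argument is then formal.
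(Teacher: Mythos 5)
Your overall strategy coincides with the paper's: the statement is the $\lambda=0$ case of Theorem \ref{ti1}, and both you and the paper run the Colding--Minicozzi scheme (split $H\equiv 0$ versus $H>0$ by the strong maximum principle, show the normalized second fundamental form $\tau=A/H$ is parallel via a weighted integration with cutoffs, then conclude by rigidity and Lawson's theorem), replacing polynomial volume growth by the decay of $R^{-2}\int_{\mathrm{annulus}}|A|^2e^{-f}$, which follows from $|A|\in L^2_f$. However, two genuine gaps remain in your execution. First, your cutoffs $\varphi_R$ are built from \emph{extrinsic} balls $B_R\subset\mathbb{R}^{n+1}$, and you assert they are compactly supported on $\Sigma$; that is exactly what fails when $\Sigma$ is not proper, and properness is not a hypothesis here --- avoiding it is the entire point of trading polynomial volume growth for $|A|\in L^2_f$. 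The paper instead uses intrinsic geodesic balls $B^\Sigma_k(p)$ and cutoffs $\psi_k$ with $|\nabla\psi_k|\le 1/k$, which are compactly supported by completeness alone; your argument needs the same switch, and it goes through verbatim after it.

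Second, and more seriously, your error-term bookkeeping does not close as written. The correct implementation integrates the quotient identity against $\varphi^2H^2e^{-f}$ (equivalently, as in the paper's proof, one tests with $\varphi=\psi u$ where $u=H$), because the operator $\mathcal{L}+2\langle\nabla\log H,\nabla\,\cdot\,\rangle$ is self-adjoint with respect to $H^2e^{-f}dv$; since $|\tau|^2H^2=|A|^2$, the \emph{only} annular error is then $\int|A|^2|\nabla\psi|^2e^{-f}$, which the hypothesis kills, and no auxiliary integrability is needed. You instead allow a remainder involving $\int|A|^2/H^2$ over annuli; since $H>0$ carries no positive lower bound, this is \emph{not} controlled by $|A|\in L^2_f$. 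Your proposed repair --- extracting finite weighted volume and $\nabla A,\nabla H\in L^2_f$ from a weighted Caccioppoli estimate on Simons' identity --- does not work: testing $\mathcal{L}|A|^2=(1-2|A|^2)|A|^2+2|\nabla A|^2$ against $\varphi^2e^{-f}$ leaves the uncontrolled term $2\int\varphi^2|A|^4e^{-f}$ on the wrong side of the inequality, and pointwise positivity of $H$ does not help absorb it. Finally, a small computational slip: the drift term in your quotient identity has the wrong sign; the correct identity is $\mathcal{L}\bigl(|A|^2/H^2\bigr)=-\frac{2}{H}\bigl\langle\nabla H,\nabla\bigl(|A|^2/H^2\bigr)\bigr\rangle+2|T|^2/H^4$, and it is precisely this sign that singles out $H^2e^{-f}$ as the weight making the first-order term integrate away.
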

Recently, there has been much interest around classification results for CWMC hypersurfaces. For instance, Guang \cite{guang2018gap} proved a gap theorem for CWMC hypersurfaces, showing that if the hypersurface has a bound condition on $|A|$, then it must be a generalized cylinder. In \cite{heilman2017symmetric}, Heilman proved a generalization of Colding-Minicozzi theorem for $\lambda>0$.
For what follows, it is important to recall a result obtained by Cheng and Wei \cite{cheng2018complete} for CWMC hypersurfaces. More precisely, they establish the following result.

\begin{teo}\label{ti0} $S^k\times \mathbb{R}^{n-k}$ , $0 \leq k \leq n$, are the only complete embedded CWMC hypersurfaces with polynomial volume growth in $\mathbb{R}^{n+1}$ satisfying $H-\lambda\geq 0$ and $\lambda\left(\text{tr}A^3(H-\lambda)+\frac{|A|^2}{2}\right)\leq 0$.
\end{teo}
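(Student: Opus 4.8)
The plan is to run a weighted (drift-Laplacian) version of the Colding--Minicozzi rigidity argument, adapted to the case $\lambda\neq 0$. Throughout write $f=|x|^2/4$, let $dv_f=e^{-f}\,dv$ be the Gaussian-weighted measure, and let $\mathcal{L}=\Delta-\tfrac12\langle x,\nabla\,\cdot\,\rangle$ be the associated drift Laplacian, which is self-adjoint with respect to $dv_f$. The polynomial volume growth hypothesis is exactly what makes this operator tractable: it guarantees $\int_\Sigma(1+|x|^2)\,dv_f<\infty$ and, together with the equations below, that the weighted integrals converge and that the boundary terms produced by integrating by parts against suitable cutoffs vanish in the limit. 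I will use this to integrate by parts freely, i.e. $\int_\Sigma(\mathcal{L} u)\,v\,dv_f=-\int_\Sigma\langle\nabla u,\nabla v\rangle\,dv_f=\int_\Sigma u\,(\mathcal{L} v)\,dv_f$ for the functions at hand.

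First I would record the two ``$\lambda$-Simons'' identities that drive everything. A direct computation from the CWMC equation $H-\lambda=\tfrac12\langle x,\nu\rangle$, the Codazzi equations and Simons' identity on $\mathbb{R}^{n+1}$ gives
\[
\mathcal{L} H=\tfrac12 H-|A|^2(H-\lambda),\qquad
\tfrac12\mathcal{L}|A|^2=|\nabla A|^2+\tfrac12|A|^2-|A|^4+\lambda\,\mathrm{tr}(A^3),
\]
which one checks are consistent on the model spheres and cylinders (there $|A|^2\equiv\tfrac12+\tfrac{\lambda}{2(H-\lambda)}$ and $\nabla A\equiv 0$). Writing $w=H-\lambda\ge 0$, the first identity reads $\mathcal{L} w=(\tfrac12-|A|^2)w+\tfrac\lambda2$. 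A strong-maximum-principle step then splits the argument: if $w$ vanishes at an interior point, then at that point $\nabla w=0$ and $\mathcal{L} w\ge 0$, which together with the equation constrains $w$ and forces $w\equiv 0$, i.e. $\langle x,\nu\rangle\equiv 0$; a complete embedded hypersurface with $\langle x,\nu\rangle\equiv 0$ is a hyperplane through the origin, the $k=0$ member of the list. Hence I may assume $w=H-\lambda>0$ everywhere.

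The heart of the proof is the observation that the precise expression in the hypothesis is produced by one canonical combination of the two identities: weighting the second by $w$ and the first by $|A|^2$,
\[
\tfrac{w}{2}\mathcal{L}|A|^2+|A|^2\,\mathcal{L} w
= w\,|\nabla A|^2 + w\,|A|^2\bigl(1-2|A|^2\bigr)+\lambda\Bigl(\mathrm{tr}(A^3)\,w+\tfrac{|A|^2}{2}\Bigr).
\]
I would integrate this against $dv_f$, using self-adjointness of $\mathcal{L}$ to rewrite the left-hand side through $\int_\Sigma|A|^2\,\mathcal{L} w\,dv_f$ and hence, via $\mathcal{L} w=(\tfrac12-|A|^2)w+\tfrac\lambda2$, purely in curvature terms; the indefinite quartic contribution $w|A|^2(1-2|A|^2)$ is to be re-expressed through the same relation so that it cancels the matching terms coming from the left. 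What I aim to extract is an inequality of the schematic form $\int_\Sigma w\,|\nabla A|^2\,dv_f\le \int_\Sigma\lambda\bigl(\mathrm{tr}(A^3)\,w+\tfrac{|A|^2}{2}\bigr)\,dv_f$, after absorbing the surviving gradient cross-terms with a Kato-type inequality relating $|\nabla A|^2$ and $|\nabla H|^2$. The two hypotheses then do their job in tandem: $w\ge 0$ makes the left side nonnegative, while $\lambda(\mathrm{tr}(A^3)\,w+\tfrac{|A|^2}{2})\le 0$ makes the right side nonpositive, forcing $w\,|\nabla A|^2\equiv 0$ and, since $w>0$, the rigidity $\nabla A\equiv 0$.

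Finally, a complete hypersurface of $\mathbb{R}^{n+1}$ with parallel second fundamental form is, by the classical classification of Lawson, a product $S^k(r)\times\mathbb{R}^{n-k}$; substituting back into the CWMC equation pins the radius to $r=\sqrt{\lambda^2+2k}-\lambda$, yielding exactly the generalized round cylinders. I expect the real difficulty to sit in the third paragraph: making the cancellation of the indefinite term $w|A|^2(1-2|A|^2)$ precise and sign-correct, selecting the Kato-type inequality that absorbs the residual $|\nabla H|^2$, and—most technically—justifying every integration by parts from polynomial volume growth alone, which requires weighted $L^2$-bounds on $|A|$ and $\nabla A$ and a careful cutoff argument to discard boundary terms. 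The maximum-principle dichotomy for $\lambda\neq 0$ also needs care, since the inhomogeneous term $\tfrac\lambda2$ in the equation for $w$ obstructs a naive strong maximum principle; working instead with $g=\langle x,\nu\rangle$, for which $\mathcal{L} g=H-|A|^2 g$, may be the cleaner route.
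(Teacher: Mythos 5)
Your overall skeleton (maximum-principle dichotomy, integrate the two Simons-type identities against $e^{-f}dv$ with polynomial volume growth justifying the integration by parts, conclude rigidity, finish with Lawson's theorem) is essentially Cheng--Wei's original strategy rather than the route this paper takes (the paper deduces the statement from Theorem \ref{ti1} combined with Corollary \ref{c1} and Lemma \ref{l2}, since polynomial volume growth gives finite weighted volume, hence properness). But your central algebraic step fails, for two compounding reasons. First, a sign-convention error: your identity $\tfrac12\mathcal{L}|A|^2=|\nabla A|^2+\tfrac12|A|^2-|A|^4+\lambda\,\text{tr}A^3$ holds in the convention $H=\text{tr}A$; in that convention, on the model cylinder $S^k(r)\times\mathbb{R}^{n-k}$ one has $\text{tr}A^3=k/r^3$, $H-\lambda=r/2$, $|A|^2=k/r^2$, so $\text{tr}A^3(H-\lambda)+\tfrac{|A|^2}{2}=k/r^2\neq 0$, and the hypothesis $\lambda\bigl(\text{tr}A^3(H-\lambda)+\tfrac{|A|^2}{2}\bigr)\le 0$ would fail on the very hypersurfaces being classified when $\lambda\neq 0$. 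The theorem is stated in the Colding--Minicozzi/Guang convention of Lemma \ref{l1}, where the identity carries $-\lambda\,\text{tr}A^3$ and the models satisfy the hypothesis with equality; in that convention your combination produces $\lambda\bigl(\tfrac{|A|^2}{2}-\text{tr}A^3(H-\lambda)\bigr)$, which the hypothesis does not control. Second, independently of conventions, your symmetric combination $\tfrac{w}{2}\mathcal{L}|A|^2+|A|^2\mathcal{L}w$ does not produce the cancellation you hope for: by self-adjointness its weighted integral equals $\tfrac32\int|A|^2\mathcal{L}w\,e^{-f}$, and comparing with the pointwise identity leaves the indefinite term $\int w|A|^2\bigl(\tfrac{|A|^2}{2}-\tfrac14\bigr)e^{-f}$, so the schematic inequality $\int w|\nabla A|^2\le\int\lambda(\cdots)$ cannot be extracted this way.

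What actually produces the hypothesis expression, with the right sign, is the antisymmetric (Wronskian) combination with $|A|$ to the first power: by Lemma \ref{l1},
\begin{eqnarray*}
w\,\mathcal{L}|A|-|A|\,\mathcal{L}w=\frac{w}{|A|}\left(|\nabla A|^2-|\nabla |A||^2\right)-\frac{\lambda}{|A|}\left(\text{tr}A^3(H-\lambda)+\frac{|A|^2}{2}\right),
\end{eqnarray*}
and the weighted integral of the left side vanishes by self-adjointness of $\mathcal{L}$ (this is where polynomial volume growth earns its keep, via cutoffs and a weighted $L^2$ bound on $|A|$ as in Lemma \ref{l2}). With $w>0$, Kato's inequality, and hypothesis (ii), both terms on the right must vanish identically. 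Note, however, that this yields $|\nabla A|\equiv|\nabla|A||$, not $\nabla A\equiv 0$: you still need the Codazzi/rank argument (as at the end of the paper's proof of Theorem \ref{ti1}, following Colding--Minicozzi) to conclude either $\nabla A\equiv 0$ and hence $S^k(r)\times\mathbb{R}^{n-k}$ with $k\geq 2$ via Lawson, or the rank-one case giving $S^1(r)\times\mathbb{R}^{n-1}$; and you must keep the branch $w>0$, $|A|\equiv 0$, which contains the hyperplanes with $\lambda\neq 0$. Finally, your maximum-principle dichotomy remains a genuine gap for $\lambda>0$: switching to $g=\langle x,\nu\rangle$ buys nothing, since on a CWMC hypersurface $g=2(H-\lambda)$ and the same inhomogeneity $\lambda$ appears in its equation. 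The correct fix is pointwise use of hypothesis (ii): if $H-\lambda$ vanishes at $p$, then $0\geq\lambda\bigl(\text{tr}A^3\cdot 0+\tfrac{|A|^2(p)}{2}\bigr)=\tfrac{\lambda}{2}|A|^2(p)$ forces $|A|(p)=0$, contradicting $H(p)=\lambda>0$.
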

The hypothesis of polynomial volume growth is used to show that weighted integrals converge in order to justify integration by parts. In fact this hypothesis also implies the hypersurface is proper. In this paper, we will replace the polynomial volume growth assumption considered by Cheng-Wei by an intrinsic condition. More specifically, we prove the following result.
\begin{teo}\label{ti1}
Let $\Sigma\subset \mathbb{R}^{n+1}$ be a complete embedded CWMC hypersurface. Suppose that $\Sigma$ satisfies the following properties:\\
(i) $H-\lambda\geq 0$;\\
(ii) $\lambda\left( \text{tr}A^3(H-\lambda)+\frac{|A|^2}{2}\right)\leq 0$;\\
(iii) $\frac{1}{k^2}\int_{B^\Sigma_{2k}(p)\setminus B^\Sigma_k(p)} |A|^2e^{-f}\rightarrow 0$, when $k\rightarrow \infty$, for a fixed point $p\in \Sigma$.\\
Then $\Sigma$ must be either a hyperplane or $S^{k}_r(0)\times \mathbb{R}^{n-k}$, $1\leq k \leq n$.
\end{teo}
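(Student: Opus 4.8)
The plan is to re-run the argument behind Theorem \ref{ti0} with the polynomial volume growth replaced, at the single place where it is actually used, by the intrinsic decay (iii). Write $f=\tfrac{|x|^2}{4}$, $d\mu=e^{-f}dv$, and let $\mathcal{L}=\Delta-\tfrac12\langle x,\nabla\,\cdot\,\rangle$ be the drift Laplacian, which is formally self-adjoint with respect to $d\mu$. The two pointwise inputs are the weighted Jacobi identity for $w:=H-\lambda=\tfrac12\langle x,\nu\rangle$, namely $\mathcal{L}w=(\tfrac12-|A|^2)w+\tfrac{\lambda}{2}$, and the weighted Simons identity $\tfrac12\mathcal{L}|A|^2=|\nabla A|^2+|A|^2(\tfrac12-|A|^2)+\lambda\,\text{tr}A^3$; both follow from the Codazzi equations and hold for any CWMC hypersurface (they are checked at once on the spheres and cylinders above). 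Polynomial volume growth enters Cheng--Wei's proof only to guarantee that the relevant weighted integrals converge and that $\mathcal{L}$ may be integrated by parts with no boundary contribution; everything else is algebraic and pointwise.

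The analytic core is the following. Following Colding--Minicozzi, on the open set $\{w>0\}$ I would study $u:=|A|^2/w^2$ and combine the two identities to obtain a pointwise formula of the shape $\tfrac12 w^2\mathcal{L}u=G+c\,\lambda\big(\text{tr}A^3(H-\lambda)+\tfrac{|A|^2}{2}\big)w^{-1}$, where $c$ is a constant and $G$ collects the gradient terms and is nonnegative by a Kato-type inequality. Hypothesis (ii) forces the zeroth-order reaction term to have a fixed sign, turning this into a one-sided weighted elliptic (in)equality. I would then fix the point $p$ of (iii), choose cut-offs $\eta_k$ with $\eta_k\equiv1$ on $B^\Sigma_k(p)$, $\operatorname{supp}\eta_k\subset B^\Sigma_{2k}(p)$ and $|\nabla\eta_k|\le C/k$, multiply the (in)equality by $\eta_k^2$ (after clearing the denominator $w^2$) and integrate against $d\mu$. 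Integration by parts of $\mathcal{L}$ produces only interior cut-off terms supported on the annulus $B^\Sigma_{2k}(p)\setminus B^\Sigma_k(p)$; after a Cauchy--Schwarz/Young step absorbing the genuine gradient contributions into $G$, every surviving error is bounded by $\tfrac{C}{k^2}\int_{B^\Sigma_{2k}(p)\setminus B^\Sigma_k(p)}|A|^2e^{-f}$, which tends to $0$ precisely by (iii). Here one crucially uses that the extrinsic factors $|x|$ coming from $\nabla w=\tfrac12 A(x^{\top})$ and from the drift are dominated by the Gaussian weight $e^{-f}$, so that all errors collapse to the single annular quantity controlled by (iii).

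Letting $k\to\infty$, monotone convergence upgrades these cut-off estimates to the global conclusion $\int_\Sigma G\,d\mu=0$ together with $\int_\Sigma\big|\lambda(\text{tr}A^3(H-\lambda)+\tfrac{|A|^2}{2})\big|\,d\mu=0$; note that no a priori finiteness of $\int_\Sigma|A|^2e^{-f}$ is assumed, it is obtained a posteriori. Vanishing of $G$ gives $\nabla A\equiv0$, i.e. $\Sigma$ has parallel second fundamental form, so by the classification of such hypersurfaces in $\mathbb{R}^{n+1}$ it is a hyperplane, a round sphere, or a cylinder $S^k_r(0)\times\mathbb{R}^{n-k}$; completeness together with the CWMC equation then fixes the radius, yielding exactly the asserted list (the sphere being the case $k=n$). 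The case $w\equiv0$ excluded above is handled separately: from $\mathcal{L}w=(\tfrac12-|A|^2)w+\tfrac{\lambda}{2}$ and (i), the strong maximum principle gives the dichotomy that either $w>0$ everywhere---the case just treated---or $w\equiv0$, in which case $\langle x,\nu\rangle\equiv0$ forces $\Sigma$ to be a smooth complete cone, hence a hyperplane through the origin.

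I expect the main obstacle to be exactly the bookkeeping of the integration-by-parts step: one must organize the computation so that every cut-off error reduces to the weighted annular integral of $|A|^2$ appearing in (iii), and nothing stronger survives (no term involving $|\nabla A|$, $|A|^4$, or the volume of the annulus), which is what singles out (iii) as the correct hypothesis. The subsidiary difficulties are the Kato-type nonnegativity of the gradient bundle $G$ and the strong-maximum-principle dichotomy for $w$ in the presence of the inhomogeneous term $\tfrac{\lambda}{2}$, where the sign of $\lambda$ must be discussed --- and where (ii) is what forces $|A|=0$ at any zero of $w$ when $\lambda>0$.
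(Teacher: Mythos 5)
Your overall strategy is the paper's own: a Colding--Minicozzi/Tasayco--Zhou quotient argument in which hypothesis (iii) replaces polynomial volume growth at the cutoff step, plus the same dichotomy for $H-\lambda$ via the maximum principle and hypothesis (ii). However, two steps as you describe them would fail. First, your pointwise identity is wrong for the quotient you chose. For $q=|A|^2/w^2$ with $w=H-\lambda$, the computation (using the paper's Lemma \ref{l1}, where the Simons term is $-\lambda\operatorname{tr}A^3$, not $+\lambda\operatorname{tr}A^3$) gives
\[
\tfrac12\, w^2\mathcal{L}q=\Bigl(|\nabla A|^2-\tfrac{|A|^2}{w^2}|\nabla w|^2\Bigr)-\tfrac{\lambda}{w}\Bigl(\operatorname{tr}A^3\,w+\tfrac{|A|^2}{2}\Bigr)-\langle\nabla q,\nabla w^2\rangle,
\]
and the gradient block $|\nabla A|^2-\frac{|A|^2}{w^2}|\nabla w|^2$ is \emph{not} nonnegative by any Kato-type inequality: Kato gives $|\nabla A|^2\ge|\nabla|A||^2$, whereas here the coefficient $|A|^2/w^2$ is exactly the quantity whose boundedness you are trying to prove. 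The usable sign structure only appears for the first-power quotient $|A|/w$, where the reaction term is $(|\nabla A|^2-|\nabla|A||^2)/|A|\ge 0$; and since $|A|$ is not smooth at its zeros one must regularize, $v=\sqrt{|A|^2+\varepsilon}$, keep $|\nabla(v/w)|^2$ as the good term, and kill the $\nabla\log w$ cross terms by the specific test-function choice $\varphi=\psi\, w$ --- it is this choice, not any domination of $|x|$ factors by the Gaussian weight, that makes every error collapse to $\frac{C}{k^2}\int_{B^\Sigma_{2k}(p)\setminus B^\Sigma_k(p)}|A|^2e^{-f}$. For $\lambda<0$ there is a further wrinkle you do not address: the $\varepsilon$-error terms do not vanish for free, and the paper must couple $\varepsilon=\bigl(k\int_{B^\Sigma_{2k}(p)}e^{-f}\bigr)^{-1}$ to the cutoff parameter and discuss the zero set of $|A|$ separately.

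Second, and more seriously, your endgame is a non sequitur. What the integral argument actually yields is $|A|=C(H-\lambda)$, and then, a posteriori, equality in Kato's inequality $|\nabla A|=|\nabla|A||$ together with the vanishing of the $\lambda$-term --- not $\nabla A\equiv 0$. Equality in Kato only says $h_{ijk}=C_k h_{ij}$ for some functions $C_k$; by itself it does not give parallel second fundamental form. The paper needs a further, substantial argument to get there: diagonalize $A$, use the full Codazzi symmetry of $h_{ijk}$ to show $\nabla A=0$ at any point where $\operatorname{rank}A\ge 2$, propagate this over $\Sigma$ by an open-closed connectedness argument, and only then invoke Lawson's theorem; the rank-one case must be handled separately, where instead $H^2=|A|^2=C^2(H-\lambda)^2$ forces $H$ to be constant and yields $S^1(r)\times\mathbb{R}^{n-1}$. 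This entire rigidity step is missing from your proposal, and it cannot be replaced by citing a ``classification of hypersurfaces with parallel second fundamental form,'' since that classification is only applicable \emph{after} one proves $\nabla A\equiv 0$, which is precisely the missing work.
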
 

\rem Note that Theorem \ref{ti1} generalizes Theorem \ref{ti3} proved by Rimoldi.\\

The second result of this paper provides a relation between the exponential volume growth condition and properness for submanifolds with weighted mean curvature of at most linear growth. For self-shrinkers, Ding and Xin \cite{ding2013volume} proved that the properness condition implies Euclidean volume growth. Cheng and Zhou \cite{cheng2013volume} proved that properness, Euclidean volume growth, polynomial volume growth and finite weighted volume are all equivalent for self-shrinkers. Alencar and Rocha \cite{alencar2018stability} showed that $|\overrightarrow{H}_f|<\infty$ and finite weighted volume imply properness (see section \ref{s2} for the definition of $\overrightarrow{H}_f$). Also, a consequence from one of their results is that for $f=|x|^2/4$, $\sup \langle \overrightarrow{H_f},\overline{\nabla} f\rangle<\infty$ and properness imply finite weighted volume and polynomial volume growth. Recently, Cheng, Vieira and Zhou \cite{cheng2019volume} proved that for hypersurfaces in $\mathbb{R}^{n+1}$ with the norm of the weighted mean curvature bounded, properness is equivalent to polynomial volume growth. Here is our second result.

\begin{teo}\label{ti2} For any complete n-dimensional immersed submanifold $\Sigma$ in $\mathbb{R}^{n+p}$, $p\geq 1$, satisfying
\[
|\overrightarrow{H}_f|\leq a_1r+a_0\ \text{on}\ \Sigma\cap B_r(0),\ \forall r>0
\]
for some $a_0,a_1\geq 0$, where $f=|x|^2/4$, the following statements are equivalent:\\
(i) $\Sigma$ properly immersed on $\mathbb{R}^{n+p}$;\\
(ii) There exist constants $C, \overline{a}_0,\overline{a}_1,\overline{a}_2$, with $\overline{a}_2<\frac{1}{4}$, such that
\begin{eqnarray*}
V(B_r(0)\cap \Sigma)\leq Ce^{\overline{a}_2r^2+\overline{a}_1r+\overline{a}_0};
\end{eqnarray*}
(iii)$\int_\Sigma e^{-f}<\infty$.
\end{teo}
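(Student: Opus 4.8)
The three conditions will be shown equivalent through the cycle $(i)\Rightarrow(ii)\Rightarrow(iii)\Rightarrow(i)$, the analytic heart being the passage $(i)\Rightarrow(ii)$. My starting point will be a pointwise identity for the drift Laplacian $\Delta_f=\Delta_\Sigma-\langle\nabla_\Sigma f,\nabla_\Sigma\,\cdot\,\rangle$ applied to the squared extrinsic distance. Writing $x$ for the position vector and splitting it into tangential and normal parts $x=x^\top+x^\perp$, one has $\Delta_\Sigma|x|^2=2n+2\langle x,\overrightarrow{H}\rangle$; using the relation between the mean curvature vector and its weighted counterpart, $\overrightarrow{H}=\overrightarrow{H}_f-\tfrac12 x^\perp$, together with $\langle\nabla_\Sigma f,\nabla_\Sigma|x|^2\rangle=|x^\top|^2$, this collapses to
\[
\Delta_f|x|^2=2n+2\langle x,\overrightarrow{H}_f\rangle-|x|^2 .
\]
(This is consistent with the constant weighted mean curvature spheres, for which the left-hand side vanishes and the right-hand side reduces to $2n-2|\lambda|\rho-\rho^2=0$.)

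For $(i)\Rightarrow(ii)$ the plan is to integrate this identity against the Gaussian weight $e^{-f}$ over the extrinsic balls $\Sigma\cap B_r(0)$. Properness guarantees these regions are compact, so for almost every $r$ (by Sard's theorem) the weighted divergence theorem applies and produces a boundary contribution $2e^{-r^2/4}\int_{\Sigma\cap\partial B_r(0)}|x^\top|\ge 0$. Discarding this nonnegative term and inserting the hypothesis $\langle x,\overrightarrow{H}_f\rangle\le |x|\,(a_1|x|+a_0)$ gives an integral inequality tying together the weighted volume $\int_{\Sigma\cap B_r}e^{-f}$, its first moment, and $\int_{\Sigma\cap B_r}|x|^2e^{-f}$. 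I would then feed this into a Gronwall/dyadic iteration over the radii to bound the weighted ball volume, and undo the weight to reach $V(B_r(0)\cap\Sigma)\le Ce^{\overline a_2 r^2+\overline a_1 r+\overline a_0}$. The hard part will be exactly here: the linear growth of $\overrightarrow{H}_f$ injects a term of the same quadratic order $|x|^2$ as the coercive drift term $-|x|^2$, so the whole difficulty is to balance these carefully enough that the resulting exponent stays \emph{strictly} below the critical value $\tfrac14$. I expect this step to demand the most work.

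The implication $(ii)\Rightarrow(iii)$ should be the soft one. Writing $\int_\Sigma e^{-f}=\int_0^\infty e^{-r^2/4}\,dV(B_r\cap\Sigma)$ and integrating by parts, the boundary term $e^{-r^2/4}V(B_r\cap\Sigma)$ tends to $0$ as $r\to\infty$ precisely because $\overline a_2<\tfrac14$, and the remaining integral $\int_0^\infty \tfrac r2\,e^{-r^2/4}V(B_r\cap\Sigma)\,dr$ converges by the same strict inequality; hence the weighted volume is finite. It is in this step that the strictness $\overline a_2<\tfrac14$ is indispensable.

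Finally, for $(iii)\Rightarrow(i)$ I would argue by contradiction via a uniform lower volume bound. If $\Sigma$ were not proper, there would be a sequence $p_i\in\Sigma$ diverging in the intrinsic metric whose images $F(p_i)$ remain in a fixed ball $B_R(0)$. On that ball the hypothesis forces $|\overrightarrow{H}_f|$, and therefore $|\overrightarrow{H}|\le|\overrightarrow{H}_f|+\tfrac12|x^\perp|$, to be bounded by a constant $\Lambda$; the monotonicity formula for submanifolds with bounded mean curvature then yields a uniform lower bound $V(B^\Sigma_{\delta}(p_i))\ge c\,\delta^n$ for the intrinsic balls. Passing to a subsequence along which these intrinsic balls are pairwise disjoint (possible since $p_i$ diverges intrinsically) and observing that their images all lie in a fixed bounded region where $e^{-f}\ge c_0>0$, I obtain $\int_\Sigma e^{-f}\ge\sum_i c_0\,c\,\delta^n=\infty$, contradicting $(iii)$; hence $\Sigma$ is proper. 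The monotonicity input here is standard, so the genuine obstacle of the whole theorem is the sharp volume estimate in $(i)\Rightarrow(ii)$.
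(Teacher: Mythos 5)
Your cycle $(i)\Rightarrow(ii)\Rightarrow(iii)\Rightarrow(i)$ is the same as the paper's, and two of the three legs are essentially the paper's own arguments: your $(ii)\Rightarrow(iii)$ is the dyadic summation over annuli $B_j\setminus B_{j-1}$ (where $\overline a_2<\tfrac14$ makes the series converge), and your $(iii)\Rightarrow(i)$ is exactly the paper's contradiction argument — intrinsically separated points $p_k$ in a fixed extrinsic ball, a bound on $|\overrightarrow{H}|$ there, a uniform lower volume bound for the disjoint intrinsic balls (the paper proves this by hand from $\Delta r_k^2\geq 2n-2|\overrightarrow{H}|r_k$ and the co-area formula rather than citing a monotonicity formula, but this is the same content), forcing $\int_\Sigma e^{-f}=\infty$. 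Your identity $\Delta_f|x|^2=2n+2\langle x,\overrightarrow{H}_f\rangle-|x|^2$ is also correct; divided by $4$ it is precisely the computation $\Delta_ff+f=\frac n2+\langle\overrightarrow{H_f},(\overline{\nabla}f)^\perp\rangle$ that the paper performs in Corollary \ref{0.2}.

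The gap is in $(i)\Rightarrow(ii)$, the heart of the theorem, and you flagged it yourself: you never supply the mechanism that makes the quadratic exponent \emph{strictly} less than $\tfrac14$. Two concrete obstructions to the plan as written. First, a direction problem: integrating your identity over $\Sigma\cap B_r$ gives
\begin{equation*}
\int_{\Sigma\cap B_r}\bigl(2n+2\langle x,\overrightarrow{H}_f\rangle-|x|^2\bigr)e^{-f}=2e^{-\frac{r^2}{4}}\int_{\Sigma\cap\partial B_r}|x^\top|,
\end{equation*}
while the co-area formula gives $\frac{d}{dr}\int_{\Sigma\cap B_r}e^{-f}=e^{-\frac{r^2}{4}}\int_{\Sigma\cap\partial B_r}\frac{|x|}{|x^\top|}$. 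Since $\frac{|x|}{|x^\top|}$ has no upper bound in terms of $|x^\top|$ (points where $\Sigma$ is nearly tangent to the sphere), the identity only bounds the boundary integral \emph{by} the derivative, i.e.\ it produces lower bounds on volume growth; a Gronwall iteration for an \emph{upper} bound does not close. Second, even if you had bounded the weighted volume, "undoing the weight" via $e^{-f}\geq e^{-r^2/4}$ on $B_r$ gives $V(B_r\cap\Sigma)\leq e^{r^2/4}\int_\Sigma e^{-f}$ — exponent exactly $\tfrac14$, never strictly below; in particular $(iii)$ by itself does not yield $(ii)$, so no soft argument can finish this step. The paper's Theorem \ref{0.1} is designed exactly to beat this: fix the region $\bar D_r=\Sigma\cap\bar B_r$ and vary the \emph{weight}, showing $I(t)=t^{-\kappa(r)}\int_{\bar D_r}e^{-f/t}$ is non-increasing on $[1,e^{\varepsilon}]$ — here the divergence theorem is applied to $e^{-f/t}\nabla f$, and the boundary term $\int_{\partial D_r}e^{-f/t}|\nabla f|$ has a favorable sign precisely because $\partial D_r$ is a level set of $f$ — while an iteration in $r$ simultaneously gives $\int_\Sigma e^{-f}<\infty$. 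Unwinding at $t=e^{\varepsilon}$ then costs only $e^{r^2/(4e^{\varepsilon})}$, with $\frac{1}{4e^{\varepsilon}}<\frac14$ strictly, which is exactly where $\overline a_2<\tfrac14$ comes from. Incidentally, your worry about the two competing quadratic terms is well founded: even in the paper's argument the resulting coefficient is $\frac{\varepsilon a_1}{2}+\frac{1}{4e^{\varepsilon}}$, which dips below $\tfrac14$ only when $a_1<\tfrac12$, so this balancing is genuinely delicate and cannot be waved through; without this (or an equivalent) device your outline does not prove $(i)\Rightarrow(ii)$.
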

\rem Recall that a submanifold $\Sigma^n\subset \mathbb{R}^{n+p}$ that satisfies $\overrightarrow{H}=\frac{x^\perp}{2}$ is called self-expander. A translating soliton is a submanifold in $\mathbb{R}^{n+p}$ that satisfies $\overrightarrow{H}=z^\perp$, for $z\in\mathbb{R}$ fixed. Note that for self-expanders $|\overrightarrow{H}_f|=|x^\perp|$  and for translating solitons $|\overrightarrow{H}_f|=|z^\perp +\frac{x^\perp}{2}|$. Therefore, if $\Sigma^n\subset \mathbb{R}^{n+p}$ is a complete properly immersed self-expander or translating soliton, then there exist constants $C,\overline{a}_0,\overline{a}_1,\overline{a}_2$, with $\overline{a}_2<\frac{1}{4}$, such that
\[
V(B_r(0)\cap \Sigma)\leq Ce^{\overline{a}_2r^2+\overline{a}_1r+\overline{a}_0}.
\]

\indent Finally, a corollary of Theorem \ref{ti1} and Theorem \ref{ti2} is the following.

\begin{cor}\label{ci1} Let $\Sigma\subset \mathbb{R}^{n+1}$ be a complete properly embedded CWMC hypersurface. If $H-\lambda\geq 0$ and $\lambda\left( \text{tr}A^3(H-\lambda)+\frac{|A|^2}{2}\right)\leq 0$, then $\Sigma$ must be a hyperplane or $S^{k}_r(0)\times \mathbb{R}^{n-k}$, $1\leq k \leq n$.
\end{cor}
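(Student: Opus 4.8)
The plan is to deduce the corollary from Theorem \ref{ti1}, whose hypotheses (i) and (ii) are assumed outright, so the only thing to supply is the intrinsic condition (iii). First I would record that for a CWMC hypersurface the weighted mean curvature vector has constant length, $|\overrightarrow{H}_f|=|\lambda|$, so the growth assumption of Theorem \ref{ti2} holds trivially with $a_1=0$ and $a_0=|\lambda|$. Since $\Sigma$ is properly embedded it is in particular properly immersed, so statement (i) of Theorem \ref{ti2} holds; by the equivalence I then obtain simultaneously the finite weighted volume $\int_\Sigma e^{-f}<\infty$ (statement (iii) there) and the exponential volume growth $V(B_r(0)\cap\Sigma)\le Ce^{\overline{a}_2r^2+\overline{a}_1r+\overline{a}_0}$ with $\overline{a}_2<\tfrac14$ (statement (ii) there).

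Next I would reduce condition (iii) of Theorem \ref{ti1} to the single global estimate $\int_\Sigma |A|^2e^{-f}<\infty$. Indeed, once this is known, completeness of $\Sigma$ lets the intrinsic balls $B^\Sigma_k(p)$ exhaust $\Sigma$, so the annular tails satisfy $\int_{B^\Sigma_{2k}(p)\setminus B^\Sigma_k(p)}|A|^2e^{-f}\le\int_{\Sigma\setminus B^\Sigma_k(p)}|A|^2e^{-f}\to 0$ as $k\to\infty$, and dividing by $k^2$ only strengthens the convergence. This step also conveniently sidesteps the mismatch between the intrinsic balls appearing in (iii) and the extrinsic balls appearing in the volume estimate of Theorem \ref{ti2}.

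To establish the finiteness I would run a weighted Simons-type integral estimate with the Bakry--\'Emery operator $\mathcal{L}=\Delta-\tfrac12\langle x,\nabla\cdot\rangle$, which is self-adjoint with respect to $e^{-f}dv$. Taking cutoffs $\varphi_k$ with $\varphi_k\equiv 1$ on $B^\Sigma_k(p)$, supported in $B^\Sigma_{2k}(p)$, and $|\nabla\varphi_k|\le C/k$, I would test the CWMC Simons identity for $\mathcal{L}(H-\lambda)$ against a suitable weight such as $(H-\lambda)\varphi_k^2e^{-f}$ and integrate by parts. Hypothesis (i), $H-\lambda\ge 0$, and hypothesis (ii), $\lambda(\mathrm{tr}A^3(H-\lambda)+\tfrac{|A|^2}{2})\le 0$, are exactly what fix the sign so that the curvature term is dominated by the cutoff error $\tfrac{C}{k^2}\int_{B^\Sigma_{2k}(p)\setminus B^\Sigma_k(p)}(\cdots)e^{-f}$ plus a lower-order term controlled by the finite weighted volume. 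The strict gap $\tfrac14-\overline{a}_2>0$ is decisive here: since $e^{-f}=e^{-|x|^2/4}$ decays strictly faster than $V(B_r(0)\cap\Sigma)$ grows, the cutoff error stays bounded and in fact vanishes, so letting $k\to\infty$ yields $\int_\Sigma|A|^2e^{-f}<\infty$. Having verified (iii), Theorem \ref{ti1} applies and forces $\Sigma$ to be a hyperplane or a cylinder $S^k_r(0)\times\mathbb{R}^{n-k}$.

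The main obstacle is precisely this third step: there is no pointwise bound on $|A|$ available, so the weighted $L^2$ norm of the second fundamental form can only be controlled by playing the Gaussian weight against the volume growth, which is why the strict inequality $\overline{a}_2<\tfrac14$ delivered by Theorem \ref{ti2} cannot be dispensed with. A subtler technical point, which is where the care really lies, is the choice of test function and the possible degeneracy of $H-\lambda$: one must arrange the integration by parts so that a bound on the bare quantity $\int_\Sigma|A|^2e^{-f}$ (and not merely a $(H-\lambda)$-weighted version) is extracted, and confirm that the limit $k\to\infty$ in the self-adjoint integration by parts is legitimate under the integrability supplied by the finite weighted volume.
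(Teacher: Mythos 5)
Your global strategy coincides with the paper's: use properness and $|\overrightarrow{H}_f|=|\lambda|$ to get $\int_\Sigma e^{-f}<\infty$ from Theorem \ref{ti2} (Corollary \ref{c1}), prove the global bound $\int_\Sigma|A|^2e^{-f}<\infty$ (this is precisely the paper's Lemma \ref{l2}), and then verify hypothesis (iii) of Theorem \ref{ti1} by monotonicity of the tail integrals. The gap sits exactly in the step you yourself flag as the crux, and it is a real one: the test function you propose goes in the wrong direction. Writing $u=H-\lambda$ and testing the identity $\mathcal{L}u+\left(|A|^2-\frac{1}{2}\right)u=\frac{\lambda}{2}$ against $u\varphi_k^2e^{-f}$, integration by parts (Lemma \ref{pl1}) gives
\[
\int_\Sigma\left(|A|^2-\frac{1}{2}\right)u^2\varphi_k^2e^{-f}=\frac{\lambda}{2}\int_\Sigma u\varphi_k^2e^{-f}+\int_\Sigma\varphi_k^2|\nabla u|^2e^{-f}+2\int_\Sigma u\varphi_k\langle\nabla\varphi_k,\nabla u\rangle e^{-f}.
\]
This fails on both counts you worry about: the left-hand side is the $(H-\lambda)^2$-weighted quantity rather than $\int|A|^2\varphi_k^2e^{-f}$, and the right-hand side carries $\int\varphi_k^2|\nabla u|^2e^{-f}$ with a plus sign, for which no a priori bound is available (Cauchy--Schwarz on the cross term only yields a lower bound for the left side, a stability-type inequality in the useless direction). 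Playing the Gaussian weight against the volume growth, or invoking the gap $\overline{a}_2<\frac{1}{4}$, does not touch either defect; indeed all that is ever needed from Theorem \ref{ti2} is the finiteness $\int_\Sigma e^{-f}<\infty$, and the cutoff error need not vanish, only stay bounded.

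The missing idea, which is the actual content of Lemma \ref{l2}, is to divide by $H-\lambda$ instead of multiplying: test against $\varphi_k^2(H-\lambda)^{-1}e^{-f}$, equivalently integrate $\mathcal{L}\log(H-\lambda)=\frac{1}{2}-|A|^2+\frac{\lambda}{2(H-\lambda)}-|\nabla\log(H-\lambda)|^2$ against $\varphi_k^2e^{-f}$. After integration by parts and Cauchy--Schwarz the two gradient terms cancel exactly, leaving
\[
\int_\Sigma\varphi_k^2|A|^2e^{-f}\leq\int_\Sigma\left(|\nabla\varphi_k|^2+\frac{\varphi_k^2}{2}+\frac{\lambda\varphi_k^2}{2(H-\lambda)}\right)e^{-f},
\]
which is the bare quantity, controlled by a multiple of $\int_\Sigma e^{-f}$. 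Two further points your sketch leaves open are then settled as follows. For $\lambda>0$ the last term has the bad sign, and hypothesis (ii) must be used quantitatively, not merely as a sign condition: it gives $\frac{\lambda}{H-\lambda}\leq-\frac{2\lambda\,\text{tr}A^3}{|A|^2}\leq 2|\lambda||A|\leq|\lambda|\left(\delta+\frac{|A|^2}{\delta}\right)$, and the $|A|^2$ piece is absorbed into the left-hand side for $\delta$ large. Finally, the division requires $H-\lambda>0$ strictly, while the corollary assumes only $H-\lambda\geq 0$; this is legitimate because in the proof of Theorem \ref{ti1} hypothesis (iii) is invoked only after the dichotomy (maximum principle for $\lambda\leq 0$, hypothesis (ii) for $\lambda>0$) has reduced matters to the case $H-\lambda>0$ everywhere, the case $H-\lambda\equiv 0$ yielding a hyperplane outright.
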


This work is divided into four sections. In section \ref{s2}, we recall some notations, basic tools and key formulas for CWMC hypersurfaces. In section \ref{s3}, we will prove the classification theorem. In section \ref{s4}, we will prove the equivalence between exponential volume growth and properness. In section \ref{s5}, we provide an application of the main results of this paper.\\

\textbf{Acknowledgement}: The authors would like to thank professor Detang Zhou for his support, suggestions and encouragement throughout this work. We also want to thank professor Ernani Ribeiro Jr. for his helpful advices. 
\section{Preliminaries}\label{s2}
In this section, we will establish some notations and recall some definitions and basic results.\\

 Let us denote by $(\overline{M}^n,\overline{g},e^{-f}dv)$ a smooth measure metric space, which is a n-dimensional Riemannian manifold $(\overline{M},\overline{g})$ endowed with $e^{-f}dv$ volume form, where $f$ is a smooth function over $\overline{M}$ and $dv$ is the volume form induced by the metric $\overline{g}$. Throughout this work, whenever we integrate, we will omit $dv$. We will also denote the connection of $(\overline{M},\overline{g})$ by $\overline{\nabla}$.\\

 For what follows, recall that the drifted Laplacian over $(\overline{M}, \overline{g})$ is
\[
\overline{\Delta}_fu=\overline{\Delta} u - \langle \overline{\nabla}f, \overline{\nabla} u\rangle.
\]
If $\Sigma \subset \mathbb{R}^{n+1}$ is a hypersurface and $f=|x|^2/4$, we denote the drifted Laplacian by
\[
\mathcal{L}=\Delta -\frac{1}{2}\langle x,\nabla\rangle.
\]
This operator is self-adjoint over $L^2(\Sigma, g, e^{-f}d\sigma)$. More precisely,
\begin{lema}\label{pl1}If $\Sigma\subset \mathbb{R}^{n+1}$, $u\in C^1_0(\Sigma)$ and $v\in C^2(\Sigma)$, then 
\[
\int_\Sigma u\mathcal{L}ve^{-f}=-\int_\Sigma \langle \nabla u, \nabla v\rangle e^{-f}.
\]
\end{lema}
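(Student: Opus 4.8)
The plan is to prove this identity by the weighted divergence theorem, exactly as one establishes self-adjointness of a drifted Laplacian on any weighted manifold. The first step is to record that, since $f=|x|^2/4$, the intrinsic gradient of $f$ on $\Sigma$ is the tangential part of the ambient gradient, namely $\nabla f=\tfrac12 x^{\top}$. Consequently $\langle\nabla f,\nabla v\rangle=\tfrac12\langle x^{\top},\nabla v\rangle=\tfrac12\langle x,\nabla v\rangle$, where the last equality holds because $\nabla v$ is tangent to $\Sigma$. This identifies $\mathcal{L}v=\Delta v-\tfrac12\langle x,\nabla v\rangle$ with the drifted Laplacian $\Delta v-\langle\nabla f,\nabla v\rangle$, so that the assertion reduces to the standard integration-by-parts formula for $\mathcal{L}$.

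The second step is to introduce the tangential vector field $W=u\,e^{-f}\,\nabla v$ on $\Sigma$ and compute its divergence via the product rule:
\[
\operatorname{div}(W)=e^{-f}\langle\nabla u,\nabla v\rangle-u\,e^{-f}\langle\nabla f,\nabla v\rangle+u\,e^{-f}\Delta v=e^{-f}\langle\nabla u,\nabla v\rangle+u\,e^{-f}\,\mathcal{L}v,
\]
where the last equality uses the identification from the first step.

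The third step is to integrate over $\Sigma$ and invoke the divergence theorem. Since $u\in C^1_0(\Sigma)$, the field $W$ is compactly supported, hence it vanishes near the boundary of any compact region containing its support; therefore $\int_\Sigma\operatorname{div}(W)=0$ with no boundary contribution, and completeness of $\Sigma$ is not required. Rearranging the integrated identity yields
\[
\int_\Sigma u\,\mathcal{L}v\,e^{-f}=-\int_\Sigma\langle\nabla u,\nabla v\rangle\,e^{-f},
\]
as claimed.

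I do not expect a serious obstacle. The only points requiring care are the harmless identity $\langle\nabla f,\nabla v\rangle=\tfrac12\langle x,\nabla v\rangle$, which is valid precisely because $\nabla v$ is tangential, and the vanishing of the boundary term, which follows directly from the compact-support hypothesis on $u$. The regularity assumptions $u\in C^1_0(\Sigma)$ and $v\in C^2(\Sigma)$ are exactly what is needed for $W$ to be a compactly supported $C^1$ field, so that the classical divergence theorem applies without further justification.
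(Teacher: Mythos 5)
Your proof is correct. The paper itself gives no argument and simply cites Colding--Minicozzi \cite{colding2012generic}, whose proof of this lemma is precisely the computation you carry out: writing $u\,\mathcal{L}v\,e^{-f}$ as $\operatorname{div}\bigl(u e^{-f}\nabla v\bigr)-e^{-f}\langle\nabla u,\nabla v\rangle$ (equivalently, $\mathcal{L}v=e^{f}\operatorname{div}(e^{-f}\nabla v)$) and integrating the compactly supported field, so your argument coincides with the standard one the paper relies on.
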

\begin{proof}
For a detailed proof, see \cite{colding2012generic}.
\end{proof}
 Let $\Sigma$ be a submanifold immersed on $\overline{M}$, endowed with the metric g induced by $\overline{g}$. We will denote by $\nabla$, $\Delta$ and $d\sigma$, the connection, Laplacian  and volume form, respectively. The second fundamental form of $(\Sigma,g)$ at $p\in \Sigma$ is defined as 
\[
A(X,Y)= (\overline{\nabla}_XY)^\perp,
\]
where $X,Y\in T_p\Sigma$. The mean curvature vector $\overrightarrow{H}$ of $\Sigma$ at $p$ is defined as 
\begin{eqnarray*}
\overrightarrow{H}=\text{tr}A.
\end{eqnarray*}
The weighted mean curvature vector of $\Sigma$ at $p$ is defined as
\[
\overrightarrow{H_f}=\overrightarrow{H}+(\overline{\nabla}f)^\perp.
\]
If $\Sigma$ is a hypersurface immersed in $\overline{M}^{n+1}$, we will denote by $h_{ij}=\langle A(e_i,e_j),\nu\rangle$, where $\lbrace e_i\rbrace$ is an orthonormal basis of $T_p\Sigma$. The mean curvature of $\Sigma$ is defined as $\overrightarrow{H}=-H\nu$ and the weighted mean curvature as $\overrightarrow{H_f}=-H_f\nu$. Throughout this work, whenever $\Sigma\subset \mathbb{R}^{n+p}$, we will be considering the smooth measure metric space $(\mathbb{R}^{n+p}, g, e^{-f}d\sigma)$, with $f=\frac{|x|^2}{4}$. 
A hypersurface $\Sigma\subset\mathbb{R}^{n+1}$ is CWMC hypersurface if 
\[
H_f=\lambda.
\]
In particular, if $\lambda=0$, $\Sigma$ is a self-shrinker.\\

For CWMC hypersurfaces, the following equations will be needed to prove the classification theorem.
\begin{lema}\label{l1}If $\Sigma\subset\mathbb{R}^{n+1}$ is a CWMC hypersurface, then
\[
\mathcal{L}(H-\lambda)+\left(|A|^2-\frac{1}{2}\right)(H-\lambda)= \frac{\lambda}{2}
\]
and
\[
\mathcal{L}|A|+\left(|A|^2-\frac{1}{2}\right)|A|= \frac{|\nabla A|^2-|\nabla |A||^2}{|A|}-\frac{\lambda \text{tr}A^3}{|A|}.
\]
\end{lema}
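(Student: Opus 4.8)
Both identities are pointwise consequences of the defining relation, which in the paper's conventions reads $H-\lambda=\tfrac12\langle x,\nu\rangle$, combined with the Weingarten equation $\overline{\nabla}_{e_i}\nu=-h_{ij}e_j$, the Codazzi equations (full symmetry of $\nabla_i h_{jk}$), and Simons' identity. Throughout I would work in a geodesic normal frame $\{e_i\}$ at the point $p$ under consideration, so that $\nabla_{e_i}e_j=0$ at $p$, and use $H=-\sum_i h_{ii}$ and $|A|^2=\sum_{ij}h_{ij}^2$.

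\emph{First equation.} Set $w=\langle x,\nu\rangle$, so that $H-\lambda=w/2$. Differentiating and using $\overline{\nabla}_{e_i}x=e_i$ and the Weingarten relation gives $\nabla_i w=-h_{ij}\langle x,e_j\rangle$; in particular $\nabla_i H=-\tfrac12 h_{ij}\langle x,e_j\rangle$, so $\langle x,\nabla H\rangle=\tfrac12\langle x,\nabla w\rangle$. Differentiating once more, using $\nabla_i\langle x,e_j\rangle=\delta_{ij}+h_{ij}w$ and the Codazzi identity $\sum_i\nabla_i h_{ij}=-\nabla_j H$, one obtains $\Delta w=\langle x,\nabla H\rangle+H-|A|^2 w=\tfrac12\langle x,\nabla w\rangle+H-|A|^2w$. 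Hence $\mathcal{L}w=\Delta w-\tfrac12\langle x,\nabla w\rangle=H-|A|^2w$. Substituting $w=2(H-\lambda)$ and $H=(H-\lambda)+\lambda$ and rearranging yields exactly $\mathcal{L}(H-\lambda)+\big(|A|^2-\tfrac12\big)(H-\lambda)=\tfrac\lambda2$.

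\emph{Second equation.} The plan is to first establish the Simons-type identity $\mathcal{L}h_{ij}=\big(\tfrac12-|A|^2\big)h_{ij}-\lambda(h^2)_{ij}$, where $(h^2)_{ij}=\sum_k h_{ik}h_{kj}$. Starting from Simons' identity, which expresses $\Delta h_{ij}$ through $\nabla_i\nabla_j H$ and the curvature terms $Hh_{ik}h_{kj}-|A|^2h_{ij}$, I would compute the Hessian of $H$ from $\nabla_i H=-\tfrac12 h_{ij}\langle x,e_j\rangle$ as above. The term $-\tfrac12(\nabla_j h_{ik})\langle x,e_k\rangle$ it produces cancels the drift term $-\tfrac12\langle x,\nabla h_{ij}\rangle$ by Codazzi symmetry, while the remaining quadratic term combines with $Hh_{ik}h_{kj}$ after writing $H=\lambda+w/2$: the $w/2$-parts cancel and only the $\lambda(h^2)_{ij}$ contribution survives. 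Tracing this identity recovers the first equation, a useful consistency check. Contracting instead with $h_{ij}$ and using $\mathcal{L}|A|^2=2\sum_{ij}h_{ij}\mathcal{L}h_{ij}+2|\nabla A|^2$ together with $\sum_{ij}h_{ij}(h^2)_{ij}=\text{tr}A^3$ gives
\[
\tfrac12\mathcal{L}|A|^2=|\nabla A|^2+\big(\tfrac12-|A|^2\big)|A|^2-\lambda\,\text{tr}A^3 .
\]
Finally, at points where $|A|\neq0$ the Kato-type identity $\tfrac12\mathcal{L}|A|^2=|A|\,\mathcal{L}|A|+|\nabla|A||^2$ converts this into the stated equation for $|A|$ after dividing by $|A|$.

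\emph{Main obstacle.} The computations are routine once the conventions are pinned down, so the real difficulty is bookkeeping: the signs attached to $\overline{\nabla}\nu$ and to $H=-\text{tr}(h)$ must be tracked consistently through Simons' identity, since a single sign slip flips the coefficient of $h_{ij}$ or of $\lambda(h^2)_{ij}$; tracing the tensor identity against the already-verified first equation is the cleanest safeguard. The only other point needing comment is regularity: the identity for $|A|$ is derived where $|A|>0$, which is exactly where the terms involving $1/|A|$ make sense, so this suffices for the intended applications.
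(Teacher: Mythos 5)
Your derivation is correct, but be aware that the paper itself contains no proof of this lemma: its ``proof'' is a one-line citation to Guang \cite{guang2018gap}. So your write-up is not parallel to an argument in the paper but a self-contained replacement for that reference, carrying out what is essentially the standard computation (and essentially what the cited source does): Weingarten and Codazzi give $\mathcal{L}\langle x,\nu\rangle = H-|A|^2\langle x,\nu\rangle$, which rearranges to the first identity; Simons' identity plus the Hessian of $H$ gives the tensor identity $\mathcal{L}h_{ij}=\left(\tfrac12-|A|^2\right)h_{ij}-\lambda (h^2)_{ij}$, and contracting with $h_{ij}$ together with the Kato-type splitting yields the second. Two points are worth making explicit, because they are exactly where the sign conventions bite. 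First, with the paper's conventions ($\overrightarrow{H}=\mathrm{tr}A=-H\nu$, hence $H=-\sum_i h_{ii}$, and $H-\lambda=\tfrac12\langle x,\nu\rangle$), Simons' identity reads $\Delta h_{ij}=-\nabla_i\nabla_j H-H(h^2)_{ij}-|A|^2h_{ij}$, and the Hessian you compute is
\[
\nabla_i\nabla_j H=-\tfrac12\left(\nabla_i h_{jk}\right)\langle x,e_k\rangle-\tfrac12 h_{ij}-(H-\lambda)(h^2)_{ij};
\]
inserting this, using Codazzi to convert $\left(\nabla_i h_{jk}\right)\langle x,e_k\rangle$ into the drift term $\langle x,\nabla h_{ij}\rangle$, one sees the $(H-\lambda)(h^2)_{ij}$ and $-H(h^2)_{ij}$ terms combine to $-\lambda(h^2)_{ij}$, exactly as you describe, and the trace of the resulting identity does reproduce the first equation, so your consistency check closes. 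Second, your contraction step implicitly fixes the meaning of $\mathrm{tr}A^3$ as $\sum_{i,j,k}h_{ij}h_{jk}h_{ki}$ (the cube of $h_{ij}=\langle A(e_i,e_j),\nu\rangle$, \emph{not} of the Weingarten map $-h$). This is the correct reading of the paper's notation: with it, the round spheres $S^n_r(0)$, $r=\sqrt{\lambda^2+2n}-\lambda$, satisfy $\mathrm{tr}A^3(H-\lambda)+\tfrac12|A|^2=0$, as they must for the hypotheses of Theorem \ref{ti1} and Corollary \ref{ci1} to admit them; with the opposite convention the sign of the $\lambda\,\mathrm{tr}A^3$ term in the lemma would flip. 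Your remark on regularity is also the right one: the second identity is asserted, and only meaningful, where $|A|\neq 0$, which is how it is used in the paper. Modulo writing out Simons' identity itself, your proposal is complete and correct.
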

\begin{proof}
For a detailed proof, see \cite{guang2018gap}.
\end{proof}
\section{A Classification for CWMC hypersurfaces}\label{s3}
In this section, we prove Theorem \ref{ti1}. The main idea of proving this theorem is to use a technique similar to that used by Tasayco and Zhou in \cite{tasayco2017uniqueness} to prove that $|A|=C(H-\lambda)$, where C is a constant.\\

\noindent\textit{Proof of Theorem \ref{ti1}}.
For $\lambda\leq 0$, from Lemma \ref{l1} we have
\[
\mathcal{L}(H-\lambda)+(|A|^2-\frac{1}{2})(H-\lambda)=\frac{\lambda}{2}\leq 0.
\]
Since $H-\lambda\geq 0$, by the maximum principle we can conclude that either $H-\lambda=0$ or $H-\lambda>0$. If $H-\lambda=0$, then from Lemma \ref{l1} we conclude that $\lambda=0$, which implies that $\Sigma$ is a self-shrinker. Moreover, Colding-Minicozzi proved in \cite{colding2012generic} that a self-shrinker such that $H=0$ has to be a hyperplane. If $\lambda>0$ and $H-\lambda=0$ at some point $p\in \Sigma$, from hypothesis (ii)
\[
0\geq \lambda\left( \text{tr}A^3(H-\lambda)+\frac{|A|^2}{2}\right)=\frac{\lambda|A|^2}{2}
\]
at $p \in \Sigma$. This implies that $|A|(p)=0$, but this contradicts the fact that $H(p)>0$. Finally, we only we need to consider the case $H-\lambda>0$.\\

By adapting Tasayco-Zhou's lemma \cite{tasayco2017uniqueness}, we will demonstrate that either $|A|=0$ or $|A|=C(H-\lambda)$, for $C>0$. Consider the functions $u=H-\lambda$ and $v= \sqrt{|A|^2+\varepsilon}$. Computing $\mathcal{L}(u)$ and $\mathcal{L}(v)$, we get
\stepcounter{thm}
\begin{eqnarray}\label{7}
\mathcal{L}u+\left(|A|^2-\frac{1}{2}\right)u= \frac{\lambda}{2}
\end{eqnarray}
and
\[
\mathcal{L}v+\left(|A|^2-\frac{1}{2}\right)v= \frac{|\nabla A|^2-|\nabla v|^2}{v}+\left(|A|^2-\frac{1}{2}\right)\frac{\varepsilon}{v}-\frac{\lambda \text{tr}A^3}{v}.
\]
Since
\stepcounter{thm}
\begin{eqnarray}\label{8}
\frac{|\nabla A|^2-|\nabla v|^2}{v}\geq 0,\ \ \mathcal{L}v+\left(|A|^2-\frac{1}{2}\right)v\geq -\frac{\varepsilon}{2v}-\frac{\lambda \text{tr}A^3}{v}.
\end{eqnarray}
Let us consider $w=\frac{v}{u}$. Thus, from (\ref{7}) we get
\begin{eqnarray*}
\mathcal{L}v&=&w\mathcal{L}u +2\langle \nabla w, \nabla u\rangle +u\mathcal{L}w\\
			&=&w\left(\frac{\lambda}{2}-|A|^2+\frac{1}{2}\right)+2\langle \nabla w, \nabla u \rangle+u\mathcal{L}w.
\end{eqnarray*}
By (\ref{8}), we have
\[
u\mathcal{L}w\geq -\frac{1}{v}\left(\frac{\varepsilon}{2}+\lambda \text{tr}A^3\right)-\frac{\lambda w}{2}-2\langle \nabla w, \nabla u\rangle.
\]
Using hypothesis (ii), it is possible to conclude that
\stepcounter{thm}
\begin{eqnarray}\label{9}
\frac{u}{v}\left(-\frac{\varepsilon}{2}-\lambda \text{tr}A^3 \right)-\frac{v\lambda}{2}\geq -\frac{\varepsilon H}{2v}.
\end{eqnarray}
From the inequality above, we obtain
\stepcounter{thm}
\begin{eqnarray}\label{10}
\mathcal{L}w\geq -\frac{\varepsilon H}{2vu^2}-2\langle \nabla w, \nabla \log u\rangle.
\end{eqnarray}
For a function $\varphi\in C_0^\infty(\Sigma)$, using integration by parts and (\ref{10}), we get
\begin{eqnarray*}
\int_\Sigma \varphi^2|\nabla w|^2e^{-f}&=&-\int_\Sigma \varphi^2 w \mathcal{L}w e^{-f}-\int_\Sigma 2\varphi w\langle \nabla \varphi, \nabla w\rangle e^{-f}\\
									   &\leq & 2\int_\Sigma \varphi^2w\langle \nabla w, \nabla \log u\rangle e^{-f}+\frac{\varepsilon}{2}\int_\Sigma \frac{\varphi^2wH}{vu^2}e^{-f}-\int_\Sigma2\varphi w\langle \nabla \varphi,\nabla w\rangle e^{-f}\\
									   &=& 2\int_\Sigma \langle \varphi \nabla w, \varphi w \nabla \log u-w\nabla \varphi\rangle e^{-f}+\frac{\varepsilon}{2}\int_\Sigma \frac{\varphi^2H}{u^3}e^{-f}\\
									   &\leq & \frac{1}{2}\int_\Sigma \varphi^2 |\nabla w|^2 e^{-f}+2\int_{\Sigma} w^2|\varphi\nabla \log u-\nabla \varphi|^2 e^{-f}+\frac{\varepsilon}{2}\int_\Sigma \frac{\varphi^2H}{u^3}e^{-f}.
\end{eqnarray*}
Therefore,
\begin{eqnarray*}
\int_\Sigma \varphi^2|\nabla w|^2\leq 4 \int_\Sigma w^2|\varphi\nabla \log u-\nabla \varphi|^2e^{-f}+\varepsilon\int_\Sigma \frac{\varphi^2H}{u^3}e^{-f}.
\end{eqnarray*}
Choosing $\varphi=\psi u$, $\psi\in C_0^\infty(\Sigma)$, we have
\begin{eqnarray*}
\int_\Sigma \psi^2 u^2|\nabla w|^2 e^{-f}&\leq & 4 \int_\Sigma v^2 |\nabla \psi|^2e^{-f}+\varepsilon\int_\Sigma \psi^2 e^{-f}+\varepsilon\lambda\int_\Sigma \frac{\psi^2}{u}e^{-f}.
\end{eqnarray*}
For $\lambda\geq 0$, choosing $\varepsilon = 0$ we obtain
\begin{eqnarray*}
\int_\Sigma \psi^2 u^2|\nabla w|^2 e^{-f}&\leq & 4 \int_\Sigma v^2 |\nabla \psi|^2e^{-f}.
\end{eqnarray*}
Consider a sequence $\psi_k\in C_0^\infty(\Sigma)$, such that $\psi_k=1$ in $B_k^\Sigma(p)$, $\psi_k=0$ in $\Sigma\setminus B_{2k}^\Sigma(p)$ and $|\nabla \psi_k|\leq 1/k$ for every $k$, we have
\begin{eqnarray*}
\int_\Sigma \psi_k^2 u^2|\nabla w|^2 e^{-f}&\leq & 4 \int_{B_{2k}^\Sigma(p)\setminus B_k^\Sigma(p)} v^2 |\nabla \psi_k|^2e^{-f}\\
										   &\leq & \frac{4}{k^2} \int_{B_{2k}^\Sigma(p)\setminus B_k^\Sigma(p)} v^2 e^{-f}\\
										   &= & \frac{4}{k^2} \int_{B_{2k}^\Sigma(p)\setminus B_k^\Sigma(p)} |A|^2 e^{-f}.
\end{eqnarray*}
By the monotone convergence theorem and hypothesis (iii), we get
\[
\int_\Sigma u^2\left|\nabla \left(\frac{|A|}{H-\lambda}\right)\right|^2e^{-f}=0,
\]
which implies that $|A|=C(H-\lambda)$, for a constant $C>0$. 

For $\lambda<0$, we have 
\[
\int_\Sigma \psi^2 u^2|\nabla w|^2 e^{-f}\leq 4 \int_\Sigma v^2 |\nabla \psi|^2e^{-f}+\varepsilon\int_\Sigma \psi^2 e^{-f}.
\]
As in the other case, consider a sequence $\psi_k\in C_0^\infty(\Sigma)$, such that $\psi_k=1$ in $B_k^\Sigma(p)$, $\psi_k=0$ in $\Sigma\setminus B_{2k}^\Sigma(p)$ and $|\nabla \psi_k|\leq 1/k$ for every $k$, hence we get
\begin{eqnarray*}
\int_\Sigma \psi_k^2 u^2|\nabla w|^2 e^{-f}&\leq & 4 \int_\Sigma v^2 |\nabla \psi_k|^2e^{-f}+\varepsilon\int_\Sigma \psi_k^2 e^{-f}\\
										   &\leq & \frac{4}{k^2}\int_{B_{2k}^\Sigma(p)\setminus B_k^\Sigma(p)} |A|^2 e^{-f} +\frac{4\varepsilon}{k^2}\int_{B_{2k}^\Sigma(p)\setminus B_k^\Sigma(p)} e^{-f}+\varepsilon\int_{B_{2k}^\Sigma(p)} e^{-f}.
\end{eqnarray*}
Choosing $\varepsilon = \left(k\int_{B_{2k}^\Sigma(p)} e^{-f}\right)^{-1}$, we have
\begin{eqnarray*}
\int_\Sigma \psi_k^2 u^2|\nabla w|^2 e^{-f}\leq \frac{4}{k^2}\int_{B_{2k}^\Sigma(p)\setminus B_k^\Sigma(p)} |A|^2 e^{-f} +\frac{4}{k^3}+\frac{1}{k}.
\end{eqnarray*}
Hence, by hypothesis (iii)
\begin{eqnarray*}
\lim_{k\rightarrow \infty} \int_\Sigma \psi_k^2 u^2|\nabla w|^2 e^{-f}=0.
\end{eqnarray*}
If the set
\[
\mathcal{A}=\lbrace p\in \Sigma; |A|(p)=0\rbrace
\]
is not empty, consider $\mathcal{B}=\Sigma\setminus \mathcal{A}$. Since $\mathcal{B}$ is an open set, let $p\in \mathcal{B}$ and $B^\Sigma_p(r)\subset \mathcal{B}$. For k sufficiently large, $B^\Sigma_p(r)\subset \text{supp}\psi_k$ and $\psi_k=1$ in $B^\Sigma_p(r)$. Hence
\[
\lim_{k\rightarrow \infty} \int_{B^\Sigma_p(r)}  u^2|\nabla w|^2 e^{-f}=0.
\]
By the dominated convergence theorem, we conclude that $|A|/(H-\lambda)$ is constant in $B^\Sigma_p(r)$. Since p is arbitrary, it is possible to conclude that $|A|/(H-\lambda)$ is constant in $\mathcal{B}$. Since $\mathcal{A}\neq \emptyset$, using a continuity argument, we conclude that $|A|=0$. If $\mathcal{A}= \emptyset$, by the dominated convergence theorem
\[
\int_\Sigma u^2\left|\nabla \left(\frac{|A|}{H-\lambda}\right)\right|^2e^{-f}=0,
\]
which implies $|A|=C(H-\lambda)$ for a constant $C>0$. Hence, when $H-\lambda>0$, we conclude that either $|A|=0$ or $|A|=C(H-\lambda)$ for a constant $C>0$. If $|A|=0$, then $\Sigma$ is a hyperplane. Otherwise, since $|A|=C(H-\lambda)$
\begin{eqnarray*}
\mathcal{L}|A|&=&\frac{|A|}{H-\lambda}\mathcal{L}(H-\lambda)\nonumber\\
			  &=& \frac{|A|\lambda}{2(H-\lambda)} +\left(\frac{1}{2}-|A|^2\right)|A|.
\end{eqnarray*}
On the other hand
\begin{eqnarray*}
\mathcal{L}|A| = \left(\frac{1}{2}-|A|^2\right)|A| +\frac{|\nabla A|^2 - |\nabla |A||^2}{|A|}-\frac{\lambda \text{tr}A^3}{|A|}.
\end{eqnarray*}
Hence, from the equations above we have
\begin{eqnarray*}
\frac{|\nabla A|^2 - |\nabla |A||^2}{|A|}&=& \frac{|A|\lambda}{2(H-\lambda)}+\frac{\lambda \text{tr}A^3}{|A|}\\
										 &=& \frac{\lambda}{(H-\lambda)|A|}\left(\text{tr}A^3(H-\lambda)+\frac{|A|^2}{2}\right).
\end{eqnarray*}
Using the hypothesis $\lambda(\text{tr}A^3(H-\lambda)+\frac{|A|^2}{2})\leq 0$ and the equality above, we conclude that 
\stepcounter{thm}
\begin{eqnarray}\label{12}
|\nabla |A||=|\nabla A|.
\end{eqnarray}
Fixing $p\in\Sigma$ and $\lbrace E_i\rbrace_{1\leq i \leq n}$ a orthonormal basis for $T_p\Sigma$, (\ref{12}) implies that for each $k$ there exists a constant $C_k$ such that
\[
h_{ijk}= C_kh_{ij}
\]
for all $i,j$. Considering a base such that $h_{ij}=\lambda_i\delta_{ij}$, by the Codazzi equation, we have
\[
h_{ijk}=0
\]
unless $i=j=k$. If $\lambda_i\neq 0$ and $i\neq j$ then
\[
0=h_{iij}=C_j\lambda_i.
\]
It follows that $C_j=0$. Hence, if the rank of the matrix $(h_{ij})$ is at least two at p, then $\nabla A(p)=0$. To show that $\nabla A=0$, let us fix $q\in\Sigma$ and suppose that $\lambda_1(q)$ and $\lambda_2(q)$ are the largest eigenvalues of $(h_{ij})(q)$. Define the following set
\[
\Lambda=\{q\in\Sigma;\lambda_1(q)=\lambda_1(p),\lambda_2(q)=\lambda_2(p)\}.
\]
Using the continuity of the $\lambda_i's$, it is possible to prove that the set $\Lambda$ is open and closed. Since $p\in \Lambda$ and $\Sigma$ is connected, $\Lambda=\Sigma$. Therefore, $\nabla A=0$ everywhere on $\Sigma$. Hence, $\Sigma$ is a isoparametric hypersurface and by a theorem proved by Lawson in \cite{lawson1969local}, $\Sigma$ must be $S^k(r)\times \mathbb{R}^{n-k}$ with $2\leq k \leq n$.\\
If the rank of the matrix $(h_{ij})$ is one, then
\[
H^2=|A|^2=C^2(H-\lambda)^2.
\]
From this equation, $H$ must be constant. Moreover, from
\[
|\nabla A|=|\nabla|A||=C|\nabla H| = 0,
\]
we conclude that $\Sigma$ is isoparametric and by Lawson's result $\Sigma$ must be $S^1(r)\times\mathbb{R}^{n-1}$.
\begin{flushright}
$\Box$
\end{flushright}

\section{Volume estimates for submanifolds with weighted mean curvature of at most linear growth }\label{s4}

In this section, we show some volume estimates and prove the second main result of this paper.\\

Let us consider $D_r\subset M$ as the level set
\begin{eqnarray*}
D_r=\{x\in M; 2\sqrt{f}<r\},
\end{eqnarray*}
and $V(r)$ the volume of $\overline{D}_r$.
\begin{teo}\label{0.1} Let $(M^n,g,e^{-f}dv)$ be a complete non-compact smooth measure metric space, with $f:M\rightarrow \mathbb{R}$ a proper function on M. If $|\nabla f|^2\leq f$ and $\Delta_ff+f\leq a_2r^2+a_1r+a_0$ on $D_r$ for all $r>0$, where $a_0,a_1,a_2$ are constants, then $\text{Vol}_f(M)<\infty$ and for $\varepsilon>0$ arbitrary
\stepcounter{thm}
\begin{eqnarray}
V(r)\leq Ce^{\varepsilon (a_2r^2+a_1r+a_0)+\frac{r^2}{4e^{\varepsilon}}}.
\end{eqnarray}
\end{teo}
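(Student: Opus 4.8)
The plan is to deduce the unweighted estimate (ii) from a weighted one and to read off finiteness (iii) from (ii). Write $\alpha=e^{-\varepsilon}\in(0,1)$ and set $\rho=2\sqrt f$, so that $D_r=\{\rho<r\}$ and the hypothesis $|\nabla f|^2\le f$ becomes $|\nabla\rho|\le1$. On $D_r$ one has $f<r^2/4$, hence $e^{-\alpha f}\ge e^{-\alpha r^2/4}$ and therefore $V(r)\le e^{\alpha r^2/4}\,V_\alpha(r)$, where $V_\alpha(r)=\int_{D_r}e^{-\alpha f}$. Since $e^{\alpha r^2/4}=e^{r^2/(4e^\varepsilon)}$, it suffices to prove $V_\alpha(r)\le Ce^{\varepsilon P(r)}$ with $P(r)=a_2r^2+a_1r+a_0$; this is exactly the content of (ii).

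To bound $V_\alpha$ I would run the co-area formula on the sublevel sets of $\rho$ together with the weighted divergence theorem for $\Delta_{\alpha f}=\Delta-\alpha\langle\nabla f,\nabla\cdot\rangle$. Two auxiliary quantities enter: $V_\alpha(r)$ and $F_\alpha(r)=\int_{D_r}f\,e^{-\alpha f}$. Because $f\equiv r^2/4$ on $\partial D_r$, the troublesome factor $1/|\nabla\rho|$ produced by the co-area formula appears identically in $V_\alpha'(r)$ and in $F_\alpha'(r)$ and cancels in their ratio, giving the clean identity $F_\alpha'(r)=\tfrac{r^2}{4}V_\alpha'(r)$; integrating by parts yields $F_\alpha(r)=\tfrac{r^2}{4}V_\alpha(r)-\tfrac12\int_0^r tV_\alpha(t)\,dt$. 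On the other hand, the hypotheses give the pointwise bound $\Delta_{\alpha f}f=\Delta_f f+(1-\alpha)|\nabla f|^2\le P(r)-\alpha f$ on $D_r$, and since $\int_{D_r}\Delta_{\alpha f}f\,e^{-\alpha f}=\int_{\partial D_r}|\nabla f|\,e^{-\alpha f}\,dA\ge0$, integrating this bound produces $\alpha F_\alpha(r)\le P(r)V_\alpha(r)$. Combining the two relations yields the differential inequality $\big(\tfrac{r^2}{4}-\tfrac1\alpha P(r)\big)V_\alpha(r)\le\tfrac12\int_0^r tV_\alpha(t)\,dt$, which I would close by a Gronwall argument (setting $J(r)=\int_0^r tV_\alpha$, so $J'=rV_\alpha$) to obtain the desired exponential control of $V_\alpha$, and hence (ii).

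For (iii) I would not argue directly but instead feed (ii) back in. By the layer-cake identity $\int_M e^{-f}=\int_0^\infty e^{-r^2/4}\,dV(r)$, an integration by parts reduces finiteness to the convergence of $\int_0^\infty r\,e^{-r^2/4}V(r)\,dr$; inserting the bound from (ii) with $\varepsilon$ chosen so that the effective quadratic coefficient $\varepsilon a_2+\tfrac1{4e^\varepsilon}$ lies strictly below $\tfrac14$ makes the integrand decay like a Gaussian, giving $\mathrm{Vol}_f(M)<\infty$.

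The main obstacle is the non-unit gradient $|\nabla\rho|$: the co-area formula only gives information at regular values of $\rho$ (so Sard's theorem and an approximation step are needed), and the factor $1/|\nabla\rho|$ in $V_\alpha'(r)$ blocks any naive sphere comparison. It is precisely the cancellation in $F_\alpha'/V_\alpha'=r^2/4$ that rescues the scheme, so the delicate point is to organize all estimates around $F_\alpha$ rather than around $V_\alpha'$ directly. Closing the Gronwall inequality, and keeping track of the fact that the argument forces a subcritical condition on the quadratic coefficient (the same phenomenon reflected in the constraint $\overline a_2<\tfrac14$ of Theorem \ref{ti2}), is where the real work lies.
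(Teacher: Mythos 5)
Your proposal is correct in substance, and it takes a genuinely different route from the paper, although both proofs rest on the identical pointwise/divergence ingredient: for $0<\alpha\le 1$,
\[
\operatorname{div}\bigl(e^{-\alpha f}\nabla f\bigr)=e^{-\alpha f}\bigl(\Delta f-\alpha|\nabla f|^{2}\bigr)\le e^{-\alpha f}\bigl(\kappa(r)-\alpha f\bigr)\ \text{ on } D_r,\qquad \kappa(r)=a_2r^2+a_1r+a_0,
\]
combined with Stokes' theorem on $\bar{D}_r$ and the nonnegativity of the boundary term $\int_{\partial D_r}e^{-\alpha f}|\nabla f|$ (your $\alpha$ is the paper's $1/t$). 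The paper integrates this information in the \emph{temperature} variable: it shows $I(t)=t^{-\kappa(r)}\int_{\bar{D}_r}e^{-f/t}$ is nonincreasing for $t\ge 1$, compares $t=1$ with $t=e^{\varepsilon}$ to get $\int_{\bar{D}_r}e^{-f/e^{\varepsilon}}\le e^{\varepsilon\kappa(r)}\int_{\bar{D}_r}e^{-f}$ for \emph{every} $\varepsilon>0$, proves $\mathrm{Vol}_f(M)<\infty$ \emph{first} via the annulus estimate $\int_{\bar{D}_r\setminus\bar{D}_{r-1}}e^{-f}\le e^{-(r-1)^2/4}V(r)$ and the resulting iteration $\int_{\bar{D}_r}e^{-f}\le(1-e^{-r})^{-1}\int_{\bar{D}_{r-1}}e^{-f}$, and only afterwards reads off the volume bound. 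You instead fix the temperature $\alpha=e^{-\varepsilon}$ and integrate in the \emph{radial} variable, converting the same identity into $\alpha F_\alpha(r)\le\kappa(r)V_\alpha(r)$, coupling it with $F_\alpha(r)=\frac{r^2}{4}V_\alpha(r)-\frac12\int_0^r tV_\alpha(t)\,dt$, and closing with Gronwall; finiteness then falls out of the layer-cake computation rather than a separate iteration, so the logical order of the two conclusions is reversed. Your route actually buys more than you claim: the Gronwall step gives $J(r)\le Cr^{2/\beta}$ with $\beta=1-\frac{4a_2}{\alpha}$, hence \emph{polynomial} (not merely exponential) growth of $V_\alpha$, so $V(r)\le Ce^{r^2/(4e^{\varepsilon})}(1+r)^{N}$; pushing $\varepsilon$ toward $\log\frac{1}{4a_2}$ makes the quadratic exponent arbitrarily close to $a_2$, which (when $0<a_2<\frac14$) is strictly sharper than the paper's best coefficient $\min_{\varepsilon}\bigl(\varepsilon a_2+\frac{1}{4e^{\varepsilon}}\bigr)=a_2\bigl(1+\log\frac{1}{4a_2}\bigr)$.

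Two caveats to address in a write-up. First, your Gronwall argument only runs when $\frac{r^2}{4}-\frac{\kappa(r)}{\alpha}$ is eventually positive, i.e. when $a_2<\frac{1}{4e^{\varepsilon}}$; since the statement asserts the bound for arbitrary $\varepsilon>0$, you need one more line for the remaining supercritical values of $\varepsilon$: when $a_2=0$ every $\varepsilon$ is subcritical, and when $a_2>0$ your subcritical bound, with effective coefficient $a_2+\delta$ for $\delta<a_2\log\frac{1}{4a_2}$, already implies the claimed bound for every $\varepsilon$ because $\varepsilon a_2+\frac{1}{4e^{\varepsilon}}\ge a_2\bigl(1+\log\frac{1}{4a_2}\bigr)$. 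This constraint is not a defect relative to the paper: its finiteness iteration likewise needs $\varepsilon_0 a_2+\frac{1}{4e^{\varepsilon_0}}-\frac14<0$ for some $\varepsilon_0$, i.e. in effect $a_2<\frac14$, so the subcriticality you flag is implicitly present there as well. Second, your worry about the factor $1/|\nabla\rho|$ is avoidable: the key identity needs no co-area formula at all, since $f=\frac{r^2}{4}-\int_0^r\frac{t}{2}\chi_{\{2\sqrt{f}<t\}}\,dt$ pointwise on $D_r$, and Fubini yields $F_\alpha(r)=\frac{r^2}{4}V_\alpha(r)-\frac12\int_0^r tV_\alpha(t)\,dt$ with no regularity of level sets whatsoever; only the Stokes step requires Sard's theorem (regular values of $f$), exactly as in the paper's proof, with general radii recovered by monotone convergence.
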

\begin{proof} Let us define
\begin{eqnarray*}
I(t)=\frac{1}{t^{\kappa(r)}}\int_{\bar{D}_r}e^{-\frac{f}{t}}dv
\end{eqnarray*}
for all $t>0$, where $\kappa(r)=a_2r^2+a_1r+a_0$. Since f is proper, $I$ is well defined.\\
Computing the derivative of $I$, we get
\stepcounter{thm}
\begin{eqnarray}
I^\prime(t)=t^{-\kappa(r)-1}\int_{\bar{D}_r}e^{-\frac{f}{t}}\left(\frac{f}{t}-\kappa(r)\right)dv.
\end{eqnarray}
On the other hand
\begin{eqnarray*}
\int_{\bar{D}_r} \text{div}\left(e^{-\frac{f}{t}}\nabla f\right)dv&=& \int_{\bar{D}_r}e^{-\frac{f}{t}}\left(\Delta f -\frac{|\nabla f|^2}{t}\right)dv\\
														   &\leq & \int_{\bar{D}_r}e^{-\frac{f}{t}}\left(|\nabla f|^2-f+\kappa(r)-\frac{|\nabla f|^2}{t}\right)dv\\
														   &\leq & \int_{\bar{D}_r}e^{-\frac{f}{t}}\left(\frac{(t-1)}{t}f-f+\kappa(r)\right)dv,\ \ t\geq 1\\
														   &=& \int_{\bar{D}_r}e^{-\frac{f}{t}}\left(\kappa(r)-\frac{f}{t}\right)dv\\
														   &=& -I^\prime(t) t^{\kappa(r)+1}.
\end{eqnarray*}
Therefore, we have
\begin{eqnarray*}
I^\prime(t)\leq -t^{-\kappa(r)-1}\int_{\bar{D}_r}\text{div}\left(e^{-\frac{f}{t}}\nabla f\right)dv.
\end{eqnarray*}
For every $r$ such that $\frac{r^2}{4}$ is a regular value of f, $D_r$ has smooth boundary. By the Stokes theorem,  we get
\begin{eqnarray*}
I^\prime(t)&\leq & -t^{-\kappa(r)-1}\int_{\partial D_r} \left\langle e^{-\frac{f}{t}}\nabla f, \frac{\nabla f}{|\nabla f|}\right\rangle dv\\
		   &\leq & -t^{-\kappa(r)-1}\int_{\partial D_r} e^{-\frac{f}{t}}|\nabla f|dv \leq 0.
\end{eqnarray*}
Integrating $I^\prime(t)$ over $t$, from $1$ to $e^{\varepsilon}$, where $\varepsilon>0$ is arbitrary, we obtain $I(e^{\varepsilon})\leq I(1)$, that is
\stepcounter{thm}
\begin{eqnarray}
e^{-\varepsilon\kappa(r)}\int_{\bar{D}_r} e^{-\frac{f}{e^{\varepsilon}}}dv\leq \int_{\bar{D}_r} e^{-f}dv.
\end{eqnarray}
By the monotone convergence theorem, the inequality above holds for any $r>0$. Since $2\sqrt{f}\leq r$ on $\bar{D}_r$, we have
\stepcounter{thm}
\begin{eqnarray}\label{4}
e^{-\varepsilon\kappa(r)}e^{-\frac{r^2}{4e^{\varepsilon}}}\int_{\bar{D}_r} dv\leq \int_{\bar{D}_r} e^{-f}dv.
\end{eqnarray}
Moreover,
\stepcounter{thm}
\begin{eqnarray}\label{5}
\int_{\bar{D}_r} e^{-f}dv-\int_{\bar{D}_{r-1}} e^{-f}dv= \int_{\bar{D}_r\setminus \bar{D}_{r-1}} e^{-f}dv\leq e^{-\frac{(r-1)^2}{4}}\int_{\bar{D}_r}dv.
\end{eqnarray}
Combining (\ref{4}) and (\ref{5}), we get
\stepcounter{thm}
\begin{eqnarray}\label{6}
\int_{\bar{D}_r} e^{-f}dv-\int_{\bar{D}_{r-1}} e^{-f}dv\leq e^{\varepsilon\kappa(r)+\frac{r^2}{4e^{\varepsilon}}-\frac{(r-1)^2}{4}}\int_{\bar{D}_r} e^{-f}dv.
\end{eqnarray}
Since $\kappa(r)=a_2r^2+a_1r+a_0$, there exist $r_0\in \mathbb{R}$ such that for $r\geq r_0$ and $\varepsilon_0$ sufficiently small
\begin{eqnarray*}
e^{\varepsilon_0(a_2r^2+a_1r+a_0)+\frac{r^2}{4e^{\varepsilon_0}}-\frac{(r-1)^2}{4}}<e^{-r}.
\end{eqnarray*}
From (\ref{6}), we have
\begin{eqnarray*}
\int_{\bar{D}_r} e^{-f}dv\leq \frac{1}{1-e^{-r}}\int_{\bar{D}_{r-1}} e^{-f}dv.
\end{eqnarray*}
Then for any integer N
\begin{eqnarray*}
\int_{\bar{D}_{r_0+N}} e^{-f}dv\leq \left(\prod^N_{i=0} \frac{1}{1-e^{-r_0-i}}\right)\int_{\bar{D}_{r_0-1}} e^{-f}dv<\infty,
\end{eqnarray*}
which implies that $\int_{M} e^{-f}dv<+\infty$. Moreover, from (\ref{4}) we obtain
\begin{eqnarray*}
e^{-\varepsilon\kappa(r)}e^{-\frac{r^2}{4e^{\varepsilon}}}\int_{\bar{D}_r} dv\leq \
int_{\bar{D}_r} e^{-f}dv\leq \int_{M} e^{-f}dv<\infty.
\end{eqnarray*}
Therefore
\begin{eqnarray*}
V(r)\leq Ce^{\varepsilon\kappa(r)+\frac{r^2}{4e^{\varepsilon}}}.
\end{eqnarray*}
\end{proof}
\rem An immediate consequence is that under the same hypothesis of Theorem \ref{0.1}, for $\varepsilon=0$ we have the following volume estimate
\[
V(r)\leq Ce^{\frac{r^2}{4}}.
\]
In the following result, we obtain a volume estimate for submanifolds with weighted mean curvature of at most linear growth. In particular, we obtain an explicit estimate for the volume of self-expanders and translating solitons.
\begin{cor}\label{0.2} Let $\Sigma^n\subset\mathbb{R}^{n+p}$ be a complete submanifold such that
\[
|\overrightarrow{H}_f|\leq a_1r+a_0\ \text{on}\ \Sigma\cap B_r(0),\ \forall r>0
\]
where $a_0,a_1\geq 0$. If $\Sigma$ is properly immersed on $\mathbb{R}^{n+p}$, then 
\begin{eqnarray*}
V(B_r(0)\cap \Sigma)\leq C e^{\varepsilon\left(\frac{a_1r^2+a_0r+n}{2}\right)+\frac{r^2}{4e^{\varepsilon}}}
\end{eqnarray*}
for $\varepsilon>0$ arbitrary.
\end{cor}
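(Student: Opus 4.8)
The plan is to apply Theorem \ref{0.1} to $M=\Sigma$ with the induced metric and $f=|x|^2/4$ restricted to $\Sigma$, since the conclusion of that theorem is, after identifying the relevant sets, exactly the estimate asserted here. First I would note that on $\Sigma$ one has $2\sqrt f=|x|$, so the sublevel set $D_r=\{2\sqrt f<r\}$ coincides with $\Sigma\cap B_r(0)$ and hence $V(r)=V(\Sigma\cap\overline B_r(0))$. Moreover, since the immersion is proper, the preimage of each closed ball $\overline B_{2\sqrt c}(0)$ is compact, which is precisely the statement that $f$ is a proper function on $\Sigma$; thus Theorem \ref{0.1} is applicable in the non-compact case (the compact case being trivial, as then $V(\Sigma)<\infty$ already dominates the right-hand side).

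Next I would verify the two differential hypotheses of Theorem \ref{0.1}. Since $\overline\nabla f=x/2$, its tangential part is $\nabla f=x^{T}/2$, so $|\nabla f|^2=|x^{T}|^2/4\le |x|^2/4=f$, which gives the hypothesis $|\nabla f|^2\le f$. The essential computation is to rewrite $\Delta_f f+f$ in terms of the weighted mean curvature vector. Using the Gauss formula $\overline\nabla_XY=\nabla_XY+A(X,Y)$ together with $\overline{\text{Hess}}\,f=\tfrac12\,\overline g$, tracing over an orthonormal tangent frame yields $\Delta f=\tfrac n2+\langle\overline\nabla f,\overrightarrow H\rangle$, and therefore $\Delta_f f=\tfrac n2+\langle\overline\nabla f,\overrightarrow H\rangle-|\nabla f|^2$. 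Combining this with the Pythagorean identity $f-|\nabla f|^2=|x^{\perp}|^2/4=|(\overline\nabla f)^{\perp}|^2$ and the definition $\overrightarrow{H}_f=\overrightarrow H+(\overline\nabla f)^{\perp}$, the orthogonal terms telescope and leave
\[
\Delta_f f+f=\frac n2+\langle(\overline\nabla f)^{\perp},\overrightarrow{H}_f\rangle.
\]

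The remaining step is a Cauchy--Schwarz estimate of the inner product. We have $\langle(\overline\nabla f)^{\perp},\overrightarrow{H}_f\rangle\le|\overline\nabla f|\,|\overrightarrow{H}_f|=\tfrac{|x|}{2}\,|\overrightarrow{H}_f|$, and on $D_r=\Sigma\cap B_r(0)$ we have $|x|\le r$ together with the hypothesis $|\overrightarrow{H}_f|\le a_1r+a_0$. Hence
\[
\Delta_f f+f\le\frac n2+\frac r2\,(a_1r+a_0)=\frac{a_1r^2+a_0r+n}{2}.
\]
This is exactly the bound required by Theorem \ref{0.1} with $\kappa(r)=\tfrac12(a_1r^2+a_0r+n)$, whose conclusion $V(r)\le Ce^{\varepsilon\kappa(r)+r^2/(4e^{\varepsilon})}$ then reads as the claimed estimate.

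I do not expect a serious obstacle: the corollary is a direct specialization of Theorem \ref{0.1}, and the only genuine content is the geometric identity $\Delta_f f+f=\tfrac n2+\langle(\overline\nabla f)^{\perp},\overrightarrow{H}_f\rangle$ obtained from the Gauss formula. The one point requiring care is the matching of sets: one must check that $2\sqrt f=|x|$ forces $D_r=\Sigma\cap B_r(0)$, so that the extrinsic ball appearing in the statement is precisely the intrinsic sublevel set used in Theorem \ref{0.1}, and the properness hypothesis is invoked exactly to guarantee compactness of these sublevel sets.
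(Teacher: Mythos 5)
Your proposal is correct and follows essentially the same route as the paper: both verify the hypotheses of Theorem \ref{0.1} for $f=|x|^2/4$ on $\Sigma$ via the identity $\Delta_f f+f=\frac{n}{2}+\langle\overrightarrow{H}_f,(\overline{\nabla}f)^{\perp}\rangle$ and the bound $|\nabla f|^2\leq f$, then bound the inner product by $\frac{r}{2}(a_1r+a_0)$ on $\Sigma\cap B_r(0)$ and invoke the theorem. Your additional remarks (the explicit Gauss-formula derivation of $\Delta f$, the identification $D_r=\Sigma\cap B_r(0)$, and the trivial compact case) only make explicit what the paper leaves implicit.
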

\begin{proof} Let us verify that $f$ satisfies the conditions of Theorem \ref{0.1}. Indeed
\begin{eqnarray*}
f-|\nabla f|^2=\frac{|x^\perp|^2}{4}\geq 0
\end{eqnarray*}
and
\begin{eqnarray*}
\Delta_ff+f &=& \frac{n}{2}+\langle \overrightarrow{H}, \overline{\nabla}f\rangle - |\nabla f|^2+f\\
            &=& \frac{n}{2}+\langle \overrightarrow{H_f}-(\overline{\nabla}f)^\perp, (\overline{\nabla}f)^\perp\rangle - |\nabla f|^2+f\\
            &=& \frac{n}{2}+\langle\overrightarrow{H_f}, (\overline{\nabla}f)^\perp\rangle.
\end{eqnarray*}
Therefore
\[
\Delta_ff+f\leq \frac{a_1r^2+a_0r+n}{2}
\]
on $\Sigma\cap B_r(0)$. Since $\Sigma$ is properly immersed, it follows that f is proper on $\Sigma$. Applying Theorem \ref{0.1}, we obtain $\int_\Sigma e^{-f}<\infty$ and
\begin{eqnarray*}
V(B_r(0)\cap \Sigma)\leq C e^{\varepsilon\left(\frac{a_1r^2+a_0r+n}{2}\right)+\frac{r^2}{4e^{\varepsilon}}}
\end{eqnarray*}
for $\varepsilon>0$ arbitrary.
\end{proof}
Using the corollary above, we will prove Theorem \ref{ti2}.

\noindent\textit{Proof of Theorem \ref{ti2}}.
From Corollary \ref{0.2}, we get that $(i)$ implies $(ii)$. To prove that $(ii)$ implies $(iii)$, we provide the following estimate
\begin{eqnarray*}
\int_\Sigma e^{\frac{-|x|^2}{4}}dv &\leq & \sum_{j=1}^\infty \int_{\Sigma \cap {B_j\setminus B_{j-1}}}e^{\frac{-|x|^2}{4}}dv\\
								   &\leq & \sum_{j=1}^\infty e^{\frac{-(j-1)^2}{4}}V(\Sigma\cap B_j)\\
								   &\leq & C\sum_{j=1}^\infty e^{\frac{-(j-1)^2}{4}}e^{\overline{a}_2r^2+\overline{a}_1r+\overline{a}_0}\\
								   &=& C\sum_{j=1}^\infty e^{\frac{(4\overline{a}_2-1)j^2+(2+4\overline{a}_1)j+(4\overline{a}_0-1)}{4}}.
\end{eqnarray*}
Since $\overline{a}_2<\frac{1}{4}$, the right side of the inequality converges. Therefore
\[
\int_\Sigma e^{\frac{-|x|^2}{4}}dv<\infty.
\]
Finally, we prove that $(iii)$ implies $(i)$. Indeed, suppose $\Sigma$ is not proper. Then there exists $r_0$ such that $\bar{B}_{r_0}\cap \Sigma$ is not compact on $\Sigma$. Thus, for a positive constant $a$, there exists a sequence $\{p_k\}$ on $\bar{B}_{r_0}\cap \Sigma$ such that $d_\Sigma(p_k,p_j)\geq a$. Therefore, $B^\Sigma(p_k,\frac{a}{2})\cap B^\Sigma(p_j,\frac{a}{2})=\emptyset$, where $B^\Sigma$ is the geodesic ball on $\Sigma$. Choosing $0<a<\text{min}\{2r_0,\frac{n}{(2a_1+1)r_0+a_0}\}$, for all $p\in B^\Sigma(p_k,\frac{a}{2})$
\begin{eqnarray*}
|p|\leq |p-p_k|+|p_k|\leq d_\Sigma(p,p_k)+|p_k|\leq 2r_0
\end{eqnarray*}
which implies that $B^\Sigma(p_k,\frac{a}{2})\subset B(0,2r_0)$ for all $k$.\\
Since $\Sigma$ is a CWMC hypersurface, for all $p\in \Sigma\cap B(0,2r_0)$
\begin{eqnarray*}
|\overrightarrow{H}|(p)&\leq & |\overrightarrow{H_f}(p)|+\frac{|p^\perp|}{2}\\
					   &\leq & (2a_1+1)r_0+a_0.
\end{eqnarray*}
Considering $r_k:B^\Sigma(p_k,\frac{a}{2})\rightarrow \mathbb{R}$, where $r_k(x)=|x-p_k|$, we have
\begin{eqnarray*}
\Delta r_k^2&=&2n+\langle \overrightarrow{H},\nabla r_k^2\rangle\\
			&\geq & 2n-2|\overrightarrow{H}|r_k\\
			&\geq & 2n-2(a_0+(2a_1+1)r_0)r_k.
\end{eqnarray*}
By the divergence theorem, since $a\leq \frac{n}{(2a_1+1)r_0+a_0}$, for all $0<r<\frac{a}{2}$
\begin{eqnarray*}
\int_{B^\Sigma (p_k,r)}[2n-2a_0r_k-2(2a_1+1)r_0r_k]dv&\leq & \int_{B^\Sigma (p_k,r)} \Delta r_k^2 dv\\
			   &=& \int_{\partial B^\Sigma (p_k,r)}\langle \nabla r_k^2,\nu\rangle dv\\
			   &\leq & 2r A(r)
\end{eqnarray*}
where $\nu$ is the outward normal vector of $\partial B^\Sigma (p_k,r)$ and $A(r)$ the area of $\partial B^\Sigma (p_k,r)$. Using the co-area formula, we obtain
\begin{eqnarray*}
\int_0^r[n-a_0s-(2a_1+1)r_0s]A(s)ds&=&\int_0^r\int_{d_\Sigma(x,p_k)=s}[n-a_0r_k-(2a_1+1)r_0r_k]dv\\
					 &=&\int_{B^\Sigma (p_k,r)} [n-a_0r_k-(2a_1+1)r_0r_k]dv\\
					 &\leq & rA(r).
\end{eqnarray*}
Therefore,
\begin{eqnarray*}
\left(\frac{n}{r}-a_0-(2a_1+1)r_0\right)\leq \frac{V'(r)}{V(r)}.
\end{eqnarray*}
Integrating from $\varepsilon>0$ to $r$, we obtain
\begin{eqnarray*}
\log\left(\frac{r}{\varepsilon}\right)^n-(a_0+(2a_1+1)r_0)(r-\varepsilon)\leq \log\frac{V(r)}{V(\varepsilon)}.
\end{eqnarray*}
Thus
\begin{eqnarray*}
r^ne^{-(a_0+2(a_1+1)r_0)(r-\varepsilon)}\frac{V(\varepsilon)}{\varepsilon^n}\leq V(r).
\end{eqnarray*}
Since
\begin{eqnarray*}
\lim_{\varepsilon\rightarrow 0} \frac{V(\varepsilon)}{\varepsilon^n}=\omega_n,
\end{eqnarray*}
for any $0<r\leq \frac{a}{2}$, we have
\begin{eqnarray*}
V(r)\geq r^n\omega_ne^{-(a_0+2(a_1+1)r_0)r}.
\end{eqnarray*}
Finally, considering that $B^\Sigma(p_k,\frac{a}{2})\cap B^\Sigma(p_j,\frac{a}{2})=\emptyset$ for $k\neq j$, $B^\Sigma(p_k,\frac{a}{2})\subset B(0,2r_0)$ for all $k$, and the inequality obtained above, we have
\begin{eqnarray*}
\int_\Sigma e^{-\frac{|x|^2}{4}}dv     &\geq& \sum_{k=1}^\infty \int_{B^\Sigma(p_k,\frac{a}{2})}e^{-\frac{|x|^2}{4}}dv\\
&\geq & e^{-r_0^2}\sum_{k=1}^\infty V\left(\frac{a}{2}\right)=+\infty,
\end{eqnarray*}
which is a contradiction.
\begin{flushright}
$\Box$
\end{flushright}
An immediate consequence of the theorem above is as follows.
\begin{cor}\label{c1} For any complete n-dimensional CWMC hypersurface $\Sigma$ in $\mathbb{R}^{n+1}$, the following statements are equivalent:\\
(i) $\Sigma$ properly immersed on $\mathbb{R}^{n+1}$;\\
(ii) There exist constants $a_2,a_1,a_0$, with $a_2<\frac{1}{4}$, and $C>0$ such that
\begin{eqnarray*}
V(B_r(0)\cap \Sigma)\leq Ce^{a_2r^2+a_1r+a_0};
\end{eqnarray*}
(iii)$\int_\Sigma e^{-f}<\infty$.
\end{cor}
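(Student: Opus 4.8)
The plan is to obtain Corollary \ref{c1} as an immediate specialization of Theorem \ref{ti2} to the codimension-one setting. The only thing that needs checking is that a CWMC hypersurface automatically satisfies the at-most-linear-growth hypothesis imposed on $|\overrightarrow{H}_f|$ in that theorem; everything else is then inherited verbatim.

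First I would recall that, by definition, a CWMC hypersurface $\Sigma\subset\mathbb{R}^{n+1}$ satisfies $H_f=\lambda$ for some constant $\lambda\in\mathbb{R}$. Since the weighted mean curvature vector of a hypersurface is $\overrightarrow{H}_f=-H_f\nu=-\lambda\nu$, it follows that
\[
|\overrightarrow{H}_f|=|\lambda|
\]
everywhere on $\Sigma$. In particular, the growth bound $|\overrightarrow{H}_f|\leq a_1 r+a_0$ on $\Sigma\cap B_r(0)$ holds for every $r>0$ with the choice $a_1=0$ and $a_0=|\lambda|\geq 0$. Thus $|\overrightarrow{H}_f|$ is not merely of linear growth but in fact bounded, so the hypothesis of Theorem \ref{ti2} is met trivially.

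With this verified, I would apply Theorem \ref{ti2} in the case $p=1$. That theorem then yields directly the equivalence of properness, of the exponential volume estimate with exponent coefficient strictly below $\tfrac14$, and of finiteness of the weighted volume $\int_\Sigma e^{-f}$, which are precisely statements (i), (ii), and (iii) of the corollary. There is no genuine obstacle here: the entire analytic content—the volume comparison producing (i)$\Rightarrow$(ii), the summation argument giving (ii)$\Rightarrow$(iii), and the packing-and-monotonicity argument establishing (iii)$\Rightarrow$(i)—is already carried out in the proof of Theorem \ref{ti2}. The corollary simply records that the constancy of the weighted mean curvature forces the hypotheses of that theorem to hold automatically, so no new estimate is required.
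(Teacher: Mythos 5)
Your proposal is correct and is exactly the paper's argument: the paper presents Corollary \ref{c1} as an ``immediate consequence'' of Theorem \ref{ti2}, and the only content to supply is the observation you make explicit, namely that $H_f=\lambda$ constant gives $|\overrightarrow{H}_f|=|\lambda|$, so the linear-growth hypothesis holds with $a_1=0$, $a_0=|\lambda|$. Nothing further is needed.
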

\section{Application of the main results}\label{s5}
To prove an application of the main theorems of this paper, we need the following lemma:

\begin{lema}\label{l2}Let $\Sigma\subset \mathbb{R}^{n+1}$ be a complete CWMC hypersurface properly embedded such that \\
$H-\lambda>0$. If
\[
\lambda\left(\text{tr}A^3(H-\lambda)+\frac{|A|^2}{2}\right)\leq 0,
\]
then $\int_\Sigma |A|^2e^{-f}<\infty.$
\end{lema}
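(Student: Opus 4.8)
The plan is to derive a weighted $L^2$ estimate on $|A|$ by testing the identity for $H-\lambda$ against a compactly supported cutoff, and to control the one genuinely dangerous term by an intrinsic lower bound on $|A|$ extracted from the hypothesis. First I would record that finite weighted volume is available: since $\Sigma$ is a properly embedded CWMC hypersurface, $|\overrightarrow{H}_f|=|\lambda|$ is bounded and hence of at most linear growth, so Theorem \ref{ti2} gives $\int_\Sigma e^{-f}<\infty$. Writing $u=H-\lambda>0$, the first equation of Lemma \ref{l1} reads $\mathcal{L}u+(|A|^2-\tfrac12)u=\tfrac{\lambda}{2}$, and dividing by $u$ yields the pointwise identity
\[
|A|^2=\frac12+\frac{\lambda}{2u}-\frac{\mathcal{L}u}{u}.
\]

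Next, for $\varphi\in C_0^\infty(\Sigma)$ I would multiply by $\varphi^2 e^{-f}$, integrate, and apply Lemma \ref{pl1} to the last term with test function $\varphi^2/u$ (legitimate, since $u$ is smooth and strictly positive). Expanding $\nabla(\varphi^2/u)$ and using Young's inequality $2\varphi u^{-1}\langle\nabla\varphi,\nabla u\rangle\le|\nabla\varphi|^2+\varphi^2 u^{-2}|\nabla u|^2$ to absorb the gradient cross term against the nonnegative quantity $\varphi^2 u^{-2}|\nabla u|^2$, one obtains the clean inequality
\[
\int_\Sigma\varphi^2|A|^2 e^{-f}\le\frac{\lambda}{2}\int_\Sigma\frac{\varphi^2}{u}e^{-f}+\frac12\int_\Sigma\varphi^2 e^{-f}+\int_\Sigma|\nabla\varphi|^2 e^{-f}.
\]

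The hard part is the first term on the right when $\lambda>0$, because $u$ may tend to $0$ and $1/u$ need not be integrable a priori. Here I would invoke the hypothesis: for $\lambda>0$ it gives $\text{tr}A^3\,u\le-\tfrac{|A|^2}{2}$, while the elementary bound $|\text{tr}A^3|\le|A|^3$ yields $\tfrac{|A|^2}{2u}\le-\text{tr}A^3\le|A|^3$, hence the intrinsic lower bound $|A|\ge\tfrac{1}{2u}$ wherever $|A|>0$; and since $H-\lambda>0$ forces $|A|>0$ (otherwise $H=0<\lambda$), this holds everywhere. Consequently $\tfrac{\lambda}{2u}\le\lambda|A|\le\tfrac12|A|^2+\tfrac{\lambda^2}{2}$, and absorbing the $\tfrac12|A|^2$ into the left-hand side (finite for each fixed cutoff) gives $\int_\Sigma\varphi^2|A|^2 e^{-f}\le(1+\lambda^2)\int_\Sigma\varphi^2 e^{-f}+2\int_\Sigma|\nabla\varphi|^2 e^{-f}$. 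For $\lambda\le0$ the term $\tfrac{\lambda}{2u}\int\varphi^2 e^{-f}$ is nonpositive and is simply discarded, leaving a bound of the same shape.

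Finally I would take the standard cutoffs $\psi_k\in C_0^\infty(\Sigma)$ with $\psi_k\equiv1$ on $B_k^\Sigma(p)$, $\psi_k\equiv0$ off $B_{2k}^\Sigma(p)$ and $|\nabla\psi_k|\le1/k$. Then $\int_\Sigma|\nabla\psi_k|^2 e^{-f}\le k^{-2}\int_\Sigma e^{-f}\to0$ and $\int_\Sigma\psi_k^2 e^{-f}\le\int_\Sigma e^{-f}<\infty$, so letting $k\to\infty$ and applying the monotone convergence theorem yields $\int_\Sigma|A|^2 e^{-f}\le(1+\lambda^2)\int_\Sigma e^{-f}<\infty$, as claimed. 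The only nonroutine ingredient is the intrinsic estimate $|A|\ge\tfrac{1}{2(H-\lambda)}$ in the $\lambda>0$ case, which converts the a priori uncontrolled term $\tfrac{\lambda}{2u}$ into something absorbable.
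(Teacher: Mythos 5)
Your proposal is correct and is essentially the paper's own argument: testing the equation for $u=H-\lambda$ against $\varphi^2/u$ is algebraically the same as the paper's computation of $\mathcal{L}\log(H-\lambda)$ followed by integration by parts, both yield the identical intermediate inequality $\int_\Sigma \varphi^2|A|^2e^{-f}\leq \int_\Sigma\bigl(|\nabla\varphi|^2+\tfrac{\varphi^2}{2}+\tfrac{\lambda\varphi^2}{2(H-\lambda)}\bigr)e^{-f}$, and both handle the $\lambda>0$ term by the same chain (hypothesis, $|\text{tr}A^3|\leq|A|^3$, Young's inequality, absorption) before closing with cutoffs and the finite weighted volume supplied by Theorem \ref{ti2}. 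The only difference is cosmetic (your fixed absorption constant versus the paper's large $\delta$, and your explicit remark that $|A|>0$ when $\lambda>0$, a point the paper uses implicitly when dividing by $|A|^2$).
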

\begin{proof}
Let us compute $\mathcal{L}(\log(H-\lambda))$,
\begin{eqnarray*}
\Delta \log(H-\lambda)&=& \text{div}(\nabla\log(H-\lambda))\\
					  &=& \text{div}\left(\frac{1}{H-\lambda}\nabla (H-\lambda)\right)\\
					  &=& \frac{1}{H-\lambda}\Delta (H-\lambda) - \frac{1}{(H-\lambda)^2}|\nabla(H-\lambda|^2.
\end{eqnarray*}
From Lema \ref{l1}, we get
\begin{eqnarray*}
\mathcal{L}(\log(H-\lambda))&=& \frac{1}{H-\lambda}\mathcal{L}(H-\lambda)-|\nabla \log (H-\lambda)|^2\\
							&=& \frac{1}{2}-|A|^2+\frac{\lambda}{2(H-\lambda)}-|\nabla \log (H-\lambda)|^2.
\end{eqnarray*}
Considering $\eta\in C_0^\infty(\Sigma)$ and integrating the equation above we have, 
\begin{eqnarray*}
\int_\Sigma \eta^2\left(|A|^2 -\frac{1}{2}-\frac{\lambda}{2(H-\lambda)}+|\nabla \log (H-\lambda)|^2\right) e^{-f}&=&-\int_\Sigma \eta^2 \mathcal{L}(\log(H-\lambda))e^{-f}\\
      &=&\int_\Sigma \langle \nabla \eta^2, \nabla \log (H-\lambda)\rangle e^{-f}\\
      &\leq& \int_\Sigma \left(|\nabla \eta|^2+\eta^2|\nabla \log(H-\lambda)|^2\right)e^{-f}.
\end{eqnarray*}
Therefore,
\[
\int_\Sigma \eta^2 |A|^2e^{-f}\leq \int_\Sigma \left(|\nabla \eta|^2+\frac{\eta^2}{2}+\frac{\lambda\eta^2}{2(H-\lambda)}\right)e^{-f}.
\]
For $\lambda<0$, we get
\[
\int_\Sigma \eta^2 |A|^2e^{-f}\leq \int_\Sigma \left(|\nabla \eta|^2+\frac{\eta^2}{2}\right)e^{-f}.
\]
Let us consider a sequence $\eta_k\in C_0^\infty(\Sigma)$, such that $\eta_k=1$ in $B_k^\Sigma(p)$, $\eta_k=0$ in $\Sigma\setminus B_{k+1}^\Sigma(p)$ and $|\nabla \eta_k|\leq 1$ for every $k$. By the monotone convergence theorem and the condition (iii) in Corollary \ref{c1}, we can conclude the proof for this case.\\
When $\lambda>0$, by the hypothesis we have
\[
\frac{\lambda}{H-\lambda}\leq -\frac{2\lambda \text{tr}A^3}{|A|^2}\leq 2|\lambda||A|\leq |\lambda|\left(\frac{|A|^2}{\delta}+\delta\right).
\]
Therefore,
\[
\int_{\Sigma}\left(1-\frac{|\lambda|}{2\delta}\right)\eta^2|A|^2 e^{-f}\leq \int_\Sigma\left(|\nabla \eta|^2+\left(1+\frac{|\lambda|\delta}{2}\right)\eta^2\right) e^{-f}.
\]
for any $\delta>0$. For $\delta$ sufficiently large, we get
\[
\int_\Sigma\eta^2|A|^2 e^{-f}\leq \frac{1+\frac{|\lambda|\delta}{2}}{1-\frac{|\lambda|}{2\delta}}\int_\Sigma \left(|\nabla \eta|^2+\frac{\eta^2}{2}\right)e^{-f}.
\]
Using the same argument as before, we conclude the proof of the lemma.
\end{proof}
As a corollary of Theorem \ref{ti1} and the lemma above, we prove Cheng-Wei classification theorem.
\begin{cor} Let $\Sigma\subset \mathbb{R}^{n+1}$ be a complete properly embedded CWMC hypersurface. If $H-\lambda\geq 0$ and $\lambda\left( \text{tr}A^3(H-\lambda)+\frac{|A|^2}{2}\right)\leq 0$, then $\Sigma$ must be either a hyperplane or $S^{k}_r(0)\times \mathbb{R}^{n-k}$, $1\leq k \leq n$.
\end{cor}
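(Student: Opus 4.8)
The plan is to derive hypothesis (iii) of Theorem \ref{ti1} from the properness assumption together with Lemma \ref{l2}, so that the corollary follows by a direct application of Theorem \ref{ti1}. Since the present statement already contains hypotheses (i) and (ii), the only thing left to supply is the annulus decay condition
\[
\frac{1}{k^2}\int_{B^\Sigma_{2k}(p)\setminus B^\Sigma_k(p)} |A|^2 e^{-f}\longrightarrow 0,\qquad k\to\infty.
\]

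First I would dispose of the degenerate cases exactly as in the proof of Theorem \ref{ti1}, since that reduction uses only (i), (ii) and Lemma \ref{l1}. For $\lambda\le 0$, the identity $\mathcal{L}(H-\lambda)+(|A|^2-\tfrac12)(H-\lambda)=\tfrac{\lambda}{2}\le 0$ together with $H-\lambda\ge 0$ and the maximum principle forces either $H-\lambda\equiv 0$ or $H-\lambda>0$ everywhere; in the first subcase Lemma \ref{l1} gives $\lambda=0$, so $\Sigma$ is a self-shrinker with $H=0$, which by Colding--Minicozzi \cite{colding2012generic} is a hyperplane. For $\lambda>0$, if $H-\lambda$ vanished at some $p$ then hypothesis (ii) would yield $\tfrac{\lambda}{2}|A|^2(p)\le 0$, forcing $|A|(p)=0$ and contradicting $H(p)=\lambda>0$. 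Hence in every case either $\Sigma$ is already a hyperplane, and we are done, or $H-\lambda>0$ on all of $\Sigma$.

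Assuming now $H-\lambda>0$ everywhere, properness lets me invoke Lemma \ref{l2}, which yields $\int_\Sigma |A|^2 e^{-f}<\infty$; note that properness enters here through Corollary \ref{c1}, which identifies it with finite weighted volume, the ingredient the proof of Lemma \ref{l2} actually consumes. With this finite total integral in hand the decay condition is immediate: since $B^\Sigma_{2k}(p)\setminus B^\Sigma_k(p)\subset\Sigma$,
\[
\frac{1}{k^2}\int_{B^\Sigma_{2k}(p)\setminus B^\Sigma_k(p)} |A|^2 e^{-f}\le \frac{1}{k^2}\int_\Sigma |A|^2 e^{-f}\longrightarrow 0
\]
as $k\to\infty$, the right-hand integral being a fixed finite constant. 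Thus all three hypotheses of Theorem \ref{ti1} hold, and that theorem forces $\Sigma$ to be a hyperplane or $S^{k}_r(0)\times\mathbb{R}^{n-k}$ with $1\le k\le n$.

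There is no genuine obstacle beyond bookkeeping: the entire content is the observation that Lemma \ref{l2} promotes properness into an $L^2_f$ bound on $|A|$, which then trivially absorbs the $1/k^2$ factor. The one point that demands care is that Lemma \ref{l2} requires the \emph{strict} inequality $H-\lambda>0$, which is precisely why the degenerate cases must be cleared away first; once that reduction is carried out, the remainder of the argument is purely formal.
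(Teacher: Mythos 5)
Your proof is correct and follows essentially the same route as the paper: invoke Lemma \ref{l2} to obtain $\int_\Sigma |A|^2 e^{-f}<\infty$, observe that this makes hypothesis (iii) of Theorem \ref{ti1} trivial since the $1/k^2$ factor absorbs the fixed finite integral, and then apply Theorem \ref{ti1}. You are in fact slightly more careful than the paper itself, which applies Lemma \ref{l2} directly even though that lemma demands the strict inequality $H-\lambda>0$ while the corollary only assumes $H-\lambda\geq 0$; your preliminary maximum-principle reduction (disposing of the case where $H-\lambda$ vanishes) fills this small gap that the paper leaves implicit.
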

\begin{proof}
To prove this corollary, we only need to see that the condition (iii) is satisfied in Theorem \ref{ti1}. Hence
\[
\frac{1}{k^2}\int_{B^\Sigma_{2k}(p)\setminus B^\Sigma_{k}(p)} |A|^2 e^{-f}\leq \frac{1}{k^2}\int_{\Sigma} |A|^2 e^{-f}.
\]
From Lemma \ref{l2}, $\int_{\Sigma} |A|^2 e^{-f}<\infty$. Therefore, when $k\rightarrow 0$, we have
\[
\frac{1}{k^2}\int_{B^\Sigma_{2k}(p)\setminus B^\Sigma_{k}(p)} |A|^2 e^{-f}\rightarrow 0
\]
concluding the proof.
\end{proof}

\bibliographystyle{amsalpha}
\bibliography{Biblio}
\end{document}